\title{Matroid lifts and representability}
\author{Daniel Irving Bernstein and Zach Walsh}
\date{December 20, 2023}
\tikzstyle{vertex}=[circle, draw, inner sep=0pt,minimum size=6pt, fill=black]
\theoremstyle{plain}
\newtheorem{thm}{Theorem}
\newtheorem{claim}{Claim}[thm]
\newtheorem{prop}[thm]{Proposition}
\newtheorem{cor}[thm]{Corollary}
\newtheorem{conj}[thm]{Conjecture}
\newtheorem*{thm*}{Theorem}
\newtheorem*{lemma*}{Lemma}
\newtheorem*{prop*}{Proposition}
\newtheorem*{cor*}{Corollary}
\newtheorem*{conj*}{Conjecture}
\theoremstyle{definition}
\newtheorem{defn}[thm]{Definition}
\newtheorem*{defn*}{Definition}
\newtheorem{ques}[thm]{Question}
\newcommand{\cM}{\mathcal{M}}
\newcommand{\cC}{\mathcal{C}}
\newcommand{\cH}{\mathcal{H}}
\newcommand{\cA}{\mathcal{A}}
\newcommand{\bZ}{\mathbb{Z}}
\newcommand{\ep}{\epsilon}
\DeclareMathOperator{\cl}{cl}
\newcommand{\del}{\!\setminus\!}
\begin{document}

\maketitle

\begin{abstract}
    A 1965 result of Crapo shows that every elementary lift of a matroid $M$ can be constructed from a linear class of circuits of $M$.
    In a recent paper, Walsh generalized this construction by defining a rank-$k$ lift of a matroid $M$ given a rank-$k$ matroid $N$ on the set of circuits of $M$, and conjectured that all matroid lifts can be obtained in this way.
    In this sequel paper we simplify Walsh's construction
    and show that this conjecture is true for representable matroids but is false in general.
    This gives a new way to certify that a particular matroid is non-representable, which we use to construct new classes of non-representable matroids.

    Walsh also applied the new matroid lift construction to gain graphs over the additive group of a non-prime finite field, generalizing a construction of Zaslavsky for these special groups.
    He conjectured that this construction is possible on three or more vertices only for the additive group of a non-prime finite field.
    We show that this conjecture holds for four or more vertices, but fails for exactly three.
\end{abstract}

\section{Introduction and preliminaries}
Given matroids $M$ and $L$ on a common ground set $E$,
$L$ is a \emph{lift} of $M$ if there exists a matroid $K$ on ground set $E \cup F$ such that $M = K / F$ and $L = K \setminus F$.
If $L$ is a lift of $M$, then the rank of $L$ is at least the rank of $M$.
We say that $L$ is a \emph{rank-$k$ lift} of $M$ if the rank of $L$ is $k$ greater than that of $M$.
Rank-$1$ lifts, called \emph{elementary lifts}, are well understood.
Indeed, a classical theorem of Brylawski~\cite{brylawski1986},
which was previously stated in the dual by Crapo~\cite{Crapo}, 
says that the elementary lifts of a matroid $M$
are in bijection with the set of \emph{linear classes of circuits of $M$},
where a linear class is a set $\mathcal{C}$ of circuits satisfying the following:
\begin{center}
    if $C_1,C_2 \in \mathcal{C}$ and $|C_1 \cup C_2| - r_M(C_1 \cup C_2) = 2$,\\
    then each circuit $C$ of $M$ contained in $C_1 \cup C_2$ is also in $\mathcal{C}$.
\end{center}
We can state this bijection between linear classes of circuits and elementary lifts as follows.

\begin{thm}[{\cite{brylawski1986,Crapo}}] \label{brylawski}
Let $M$ be a matroid on ground set $E$ and let $\cC$ be a linear class of circuits of $M$. 
Then the function $r_{M'} \colon 2^E \to \mathbb Z$ defined, for all $X \subseteq E$, by 
$$r_{M'}(X)=\begin{cases}
r_M(X) & \text{ if each circuit of $M|X$ is in $\cC$},  \\
r_M(X)+1 & \text{ otherwise} 
\end{cases}$$
is the rank function of an elementary lift $M'$ of $M$.
Moreover, every elementary lift of $M$ can be obtained in this way.
\end{thm}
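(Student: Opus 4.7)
The plan is to handle the two directions separately. For the forward direction, I would verify that $r_{M'}$ is a matroid rank function by writing $r_{M'}(X) = r_M(X) + \phi(X)$, where $\phi(X) \in \{0,1\}$ indicates whether $M|X$ has a circuit outside $\cC$. Monotonicity and unit-increase are immediate, and $\phi$ is itself monotone since a circuit of a subset is a circuit of the ambient set. The nontrivial check is submodularity, and a short case analysis (using monotonicity of $\phi$) reduces it to the single configuration $\phi(A) = \phi(B) = 0$ and $\phi(A \cup B) = 1$, where one must establish the strict inequality $r_M(A) + r_M(B) > r_M(A \cup B) + r_M(A \cap B)$. A witnessing circuit $C \subseteq A \cup B$ with $C \notin \cC$ must meet both $A \setminus B$ and $B \setminus A$; if $(A, B)$ were nevertheless a modular pair, then extending a basis of $A \cap B$ through $A$ and through $B$ yields fundamental circuits $C_1 \subseteq A$ and $C_2 \subseteq B$ that form a modular pair of circuits with $C \subseteq C_1 \cup C_2$. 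Since $\phi(A) = \phi(B) = 0$ forces $C_1, C_2 \in \cC$, the linear class axiom would then place $C$ in $\cC$, contradicting $\phi(A \cup B) = 1$. I expect this submodularity argument to be the main technical obstacle.

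Having verified $r_{M'}$ as a rank function, I would realize the lift by constructing a matroid $K$ on $E \cup \{e\}$ via $r_K(Y) = r_{M'}(Y)$ when $e \notin Y$ and $r_K(Y) = r_M(Y \setminus \{e\}) + 1$ when $e \in Y$. A routine check, reducing to the submodularity of $r_{M'}$ and of $r_M$, shows that $r_K$ is a matroid rank function, and by construction $K \setminus e = M'$ and $K/e = M$, so $M'$ is indeed an elementary lift of $M$.

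For the converse, given any elementary lift $L$ of $M$ realized by some $K$ on $E \cup \{e\}$ with $K/e = M$ and $K \setminus e = L$, I would define $\cC$ to be the set of circuits of $M$ that are also circuits of $L$---equivalently, the circuits of $K$ contained in $E$. The rank formula for $L$ follows by direct check: a subset $X \subseteq E$ is independent in $L$ iff it contains no member of $\cC$, so the rank difference between $L$ and $M$ on $X$ is $1$ exactly when some circuit of $M|X$ lies outside $\cC$. To verify that $\cC$ is a linear class, given $C_1, C_2 \in \cC$ with $|C_1 \cup C_2| - r_M(C_1 \cup C_2) = 2$, the rank relationship between $M$ and $K$ combined with the modular-pair hypothesis forces every circuit of $M$ inside $C_1 \cup C_2$ to lift to a circuit of $K$ avoiding $e$, and hence to lie in $\cC$.
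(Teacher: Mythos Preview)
The paper does not give its own proof of Theorem~\ref{brylawski}; it is quoted as a classical result of Brylawski and Crapo. The forward direction is, however, the rank-$1$ special case of Theorem~\ref{thm: new lift construction}, which the paper obtains from Theorem~\ref{old lift construction} (cited from \cite{walsh2022new} without proof) via Proposition~\ref{simpler perfect}. Your direct rank-function verification is therefore a different route, and largely sound, but two points need attention.

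The real gap is in the submodularity step. With $(A,B)$ a modular pair of sets you can indeed build a basis $I_A\cup I_B$ of $M|(A\cup B)$ with $I_A\subseteq A$, $I_B\subseteq B$, and then each fundamental circuit $D_e$ (for $e\in C\setminus(I_A\cup I_B)$) lies entirely in $A$ or entirely in $B$ and hence in $\cC$; moreover $C\subseteq\bigcup_e D_e$. But $|C\setminus(I_A\cup I_B)|$ may exceed $2$, so what you obtain is a \emph{perfect collection} of circuits from $\cC$ whose union contains $C$, not a modular pair $C_1,C_2$ as you assert. Concluding $C\in\cC$ from this is precisely the implication $(*')\Rightarrow(*)$ of Proposition~\ref{simpler perfect} (take $N$ to be the rank-$1$ matroid whose loops are $\cC$), and that implication requires a genuine inductive argument, not a one-line appeal to the linear-class axiom.

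A smaller slip in the converse: the assertion ``$X$ is independent in $L$ iff it contains no member of $\cC$'' is false (for $K=U_{2,4}$ and any element $e$ one has $\cC=\varnothing$, yet $L=U_{2,3}$ has a dependent triple). The rank formula you need is nevertheless correct: $r_L(X)-r_M(X)=1$ iff $e\in\cl_K(X)$ iff $X$ contains a set of the form $C'-e$ for some circuit $C'$ of $K$ through $e$, and these sets are exactly the circuits of $M$ not in $\cC$.
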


This raises the question of whether a similar characterization of higher-rank lifts is possible.
Walsh~\cite[Theorem 2]{walsh2022new} described a procedure that constructs a rank-$k$ lift of a matroid $M$
from a rank-$k$ matroid $N$ on the circuit set of $M$.
For this construction to work, $N$ must satisfy a particular constraint.
When $k = 1$, this constraint precisely says that the loops of $N$ are a linear class of circuits of $M$,
so Walsh's construction generalizes Theorem \ref{brylawski}.
Our first main result, stated below using the notation $\cl_N$ for the closure operator of a matroid $N$, is a simplification of Walsh's original construction in which we only require $N$ to satisfy a condition concerning pairs of circuits of $M$.

\begin{thm} \label{thm: new lift construction}
Let $M$ be a matroid on ground set $E$ and let $N$ be a matroid whose ground set is the circuit set of $M$.
Assume that if $C_1,C_2$ are circuits of $M$ for which $|C_1 \cup C_2| - r_M(C_1 \cup C_2) = 2$,
then each circuit $C$ of $M$ contained in $C_1 \cup C_2$ satisfies $C \in \cl_N(\{C_1, C_2\})$.
Then the function $r: 2^E \rightarrow \mathbb{Z}$ defined, for all $X \subseteq E$, by
$$r(X)=r_M(X)+r_N(\{C \colon \textrm{$C$ is a circuit of $M|X$}\})$$ 
is the rank function of a rank-$r(N)$ lift of $M$.
\end{thm}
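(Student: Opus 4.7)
The plan is to verify that $r$ satisfies the matroid rank axioms; once that is done, the lift property is essentially automatic. Writing $\cC(X)$ for the circuit set of $M|X$, we have $\cC(X)\subseteq \cC(Y)$ whenever $X\subseteq Y$, so the difference $r-r_M=r_N\circ \cC$ is monotone on $2^E$. Once $r$ is known to be a rank function, this monotonicity of $r_L-r_M$ is exactly the statement that $M$ is a quotient of $L$, equivalently, that $L$ is a lift of $M$. The rank difference $r(E)-r_M(E)=r_N(\cC(E))=r(N)$ then gives that the lift is a rank-$r(N)$ lift.

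Verifying that $r$ is a rank function I would do via the local axioms: $r(\emptyset)=0$ (immediate), the unit-increment $r(X)\le r(X\cup e)\le r(X)+1$, and the local submodularity condition $r(X\cup e_1)=r(X)=r(X\cup e_2)\Rightarrow r(X\cup\{e_1,e_2\})=r(X)$. The key technical input for both nontrivial axioms is the following lemma: if $e\in \cl_M(X)$ and $C$ is any circuit of $M$ with $e\in C\subseteq X\cup e$, then every circuit $C''$ of $M$ with $e\in C''\subseteq X\cup e$ lies in $\cl_N(\cC(X)\cup\{C\})$. An immediate corollary is $r_N(\cC(X\cup e))\le r_N(\cC(X))+1$ whenever $e\in\cl_M(X)$.

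Granting the lemma, the unit-increment axiom splits cleanly. If $e\notin\cl_M(X)$ then no circuit of $M$ in $X\cup e$ can use $e$, so $\cC(X\cup e)=\cC(X)$ and $r(X\cup e)=r(X)+1$; if $e\in\cl_M(X)$ then $r_M(X\cup e)=r_M(X)$ and the lemma gives $r(X\cup e)\le r(X)+1$. For the local submodularity condition, the two equalities force $e_1,e_2\in\cl_M(X)$ and force every circuit through $e_i$ in $X\cup e_i$ to already lie in $\cl_N(\cC(X))$. New circuits in $X\cup\{e_1,e_2\}$ beyond these either contain only $e_2$ (covered by the previous case) or contain both $e_1$ and $e_2$; for the latter, a second application of the lemma with $X\cup e_2$ in place of $X$ and $e_1$ in place of $e$ lands them in $\cl_N(\cC(X\cup e_2)\cup\{C\})=\cl_N(\cC(X))$ for a suitably chosen $C\in \cC(X\cup e_1)\subseteq \cl_N(\cC(X))$.

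The proof of the lemma is where the hypothesis of the theorem is used, and this is the main obstacle. Given $C''\ne C$ with $e\in C''\subseteq X\cup e$, strong circuit elimination at $e$ produces a circuit $C'\subseteq (C\cup C'')\del e$, so $C'\in \cC(X)$. When $|C\cup C''|-r_M(C\cup C'')=2$, the hypothesis places every circuit of $M$ in $C\cup C''$ into $\cl_N(\{C,C''\})$; in particular $C'\in \cl_N(\{C,C''\})$, and a short rank-$2$ manipulation in $N$ transfers this to $C''\in \cl_N(\{C,C'\})\subseteq \cl_N(\cC(X)\cup\{C\})$. The genuinely harder case is $|C\cup C''|-r_M(C\cup C'')>2$, where the hypothesis does not apply to $(C,C'')$ directly; here I expect the argument to proceed by a chain-of-modular-pairs reduction, inserting intermediate circuits of $M$ inside $C\cup C''$ that pair modularly with their neighbors so as to walk $C''$ into $\cl_N(\cC(X)\cup\{C\})$ one modular step at a time.
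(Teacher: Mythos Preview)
Your strategy---verify the rank axioms directly---differs from the paper's, which instead invokes Walsh's earlier construction (Theorem~\ref{old lift construction}) under the stronger hypothesis $(*)$ that every circuit in the union of a \emph{perfect} collection $\cC'$ lies in $\cl_N(\cC')$, and then proves (Proposition~\ref{simpler perfect}) that the modular-pair hypothesis $(*')$ already implies $(*)$. Your approach is viable in principle, and your reductions of the unit-increment and local-submodularity axioms to the ``key lemma'' are correct. But the key lemma itself is not established, and that is exactly where the theorem's content lies.

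Even in the corank-$2$ case your ``rank-$2$ manipulation'' is invalid as stated: from $C'\in\cl_N(\{C,C''\})$ one cannot infer $C''\in\cl_N(\{C,C'\})$ (take $C'$ a loop of $N$, or parallel to $C$). This is easily repaired---when $|C\cup C''|-r_M(C\cup C'')=2$ the unique circuit $C'$ in $(C\cup C'')\setminus e$ satisfies $C\cup C'=C\cup C''$, so one should apply the hypothesis to the modular pair $(C,C')$ and obtain $C''\in\cl_N(\{C,C'\})$ directly. The serious gap is the corank~$>2$ case, which you leave as an ``expectation.'' Propagating the modular-pair condition to arbitrary corank is precisely the content of the paper's Proposition~\ref{simpler perfect}, and its proof is not a routine chain argument: it takes a minimal counterexample perfect collection $\cC'$, uses connectedness of $M|(\cup\cC')$ to locate a modular pair $(C_1,C_2)$ inside $\cup\cC'$ with $C\subseteq C_1\cup C_2$, and then argues that each $C_i$ already lies in $\cl_N$ of a strictly smaller perfect subcollection. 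Your proposal supplies none of this machinery, so as written it is incomplete at the crux; what you have is essentially Walsh's original rank-axiom verification with the step requiring $(*)$ replaced by a promissory note.
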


Given a matroid $M$ and another matroid $N$ on the circuits of $N$ satisfying the hypothesis of Theorem~\ref{thm: new lift construction},
we write $M^N$ for the lift constructed in Theorem~\ref{thm: new lift construction}.
There are natural choices for a matroid $N$ satisfying the hypothesis of Theorem \ref{thm: new lift construction}, such as the derived matroids \cite{Jurrius,OxleyWang,Longyear}, matroids from gain graphs over certain groups \cite{walsh2022new}, and rank-$2$ uniform matroids.

It was conjectured in~\cite{walsh2022new} that every lift of $M$ is isomorphic to $M^N$ for some matroid $N$ on the circuits of $M$.
We prove that this true for representable matroids but false in general.
We use these facts to derive a new certificate for non-representability
which we then use to generate new families of non-representable matroids.
In particular, the following is our second main result.

\begin{thm} \label{counterexample}
For each integer $r \ge 5$ there is a rank-$r$ non-representable sparse paving matroid $K$ with a two-element set $X$ so that there is no matroid $N$ on the set of circuits of $K/X$ for which $(K/X)^N \cong K\del X$.
\end{thm}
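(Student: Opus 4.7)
The proof combines the non-representability certificate that follows from the paper's first main result with an explicit sparse paving construction. The certificate is the contrapositive of the representable case of Walsh's conjecture: if a matroid $M$ has a lift $L$ with $L \not\cong M^N$ for every matroid $N$ on the circuit set of $M$, then $M$ is non-representable. Since $K \del X$ is a rank-$2$ lift of $K/X$ witnessed by $K$ itself, it suffices to construct, for each $r \ge 5$, a sparse paving rank-$r$ matroid $K$ together with a two-element subset $X \subseteq E(K)$ for which $K \del X \not\cong (K/X)^N$ for any matroid $N$ on the circuits of $K/X$. The non-representability of $K$ then follows automatically, because a matroid with a non-representable minor is itself non-representable.

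To rule out every such $N$, the plan is to exhibit three specific circuits $C_1, C_2, C_3$ of $K/X$ whose combined behavior is incompatible with every matroid structure on the circuit set of $K/X$. The construction is arranged so that the only circuits of $K/X$ contained in any of the sets $C_i \cup C_j$ (for $i<j$) or $C_1 \cup C_2 \cup C_3$ are among $\{C_1, C_2, C_3\}$. Then Theorem~\ref{thm: new lift construction}, applied to the hypothetical $(K/X)^N \cong K \del X$, forces
\[
r_N(\{C_i, C_j\}) = r_{K \del X}(C_i \cup C_j) - r_{K/X}(C_i \cup C_j), \qquad r_N(\{C_1, C_2, C_3\}) = r_{K\del X}(C_1 \cup C_2 \cup C_3) - r_{K/X}(C_1 \cup C_2 \cup C_3).
\]
We engineer $K$ so that each pairwise difference equals $1$ while the triple-wise difference equals $2$. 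The pairwise conditions force $C_1, C_2, C_3$ to be pairwise parallel in $N$, and hence to lie in a single rank-$1$ flat of $N$; but then $\{C_1, C_2, C_3\}$ cannot have rank $2$ in $N$, a contradiction.

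The construction itself specifies $K$ via its circuit-hyperplanes. For the base case $r = 5$ we exhibit a small explicit configuration so that after contracting $X$ we obtain three circuits $C_1, C_2, C_3$ of $K/X$ with the required pairwise and triple-wise rank pattern, and we verify the sparse paving conditions (every circuit has size $r$ or $r+1$, and any two circuit-hyperplanes differ in at least two elements). For $r > 5$ we extend the base example by an operation that preserves the obstruction while increasing the rank---for instance, taking a direct sum with a free matroid on fresh ground-set elements---and we check that this operation preserves sparse paving and does not introduce new circuits of $K/X$ inside the critical unions.

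The main obstacle is this explicit construction. We must simultaneously ensure that $K$ is a valid sparse paving matroid, that no extraneous circuits of $K/X$ appear inside the designated unions (so that the rank formula yields the clean equations above), and that the pairwise-versus-triple rank discrepancy is exactly $1$ versus $2$. The sparse paving condition is restrictive---all circuits are confined to sizes $r$ and $r+1$, and circuit-hyperplanes cannot nearly coincide---so satisfying all three requirements simultaneously is the delicate combinatorial part of the argument.
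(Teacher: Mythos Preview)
Your high-level strategy---build a sparse paving $K$, show that no $N$ on the circuits of $K/X$ satisfies $(K/X)^N \cong K\del X$, and conclude via Proposition~\ref{prop: representable lifts}---matches the paper. (Note that Proposition~\ref{prop: representable lifts} directly certifies $K$ non-representable; the detour through ``$M$ is non-representable, hence so is $K$'' is both unnecessary and, as stated, incorrect: the contrapositive of Proposition~\ref{prop: representable lifts} says nothing about $K/X$.)

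There are, however, two genuine gaps. First, the proposed extension by direct sum with a free matroid destroys the sparse paving property: if $K$ has rank $r$ and a circuit-hyperplane of size $r$, then in $K\oplus U_{k,k}$ that same set is still a circuit of size $r$ in a matroid of rank $r+k$, so the result is not even paving. The paper avoids any inductive extension and instead constructs $K(r,t)$ directly for every $r\ge 4$ by choosing $t$ with $r\le 2t-2$.

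Second, your stipulation that $C_1,C_2,C_3$ are the \emph{only} circuits of $K/X$ inside each $C_i\cup C_j$ and inside $C_1\cup C_2\cup C_3$ cannot be met for $r\ge 5$. The smallest circuits of $K/X$ have size $r-2$ (since $K$ is sparse paving), so if $C_i\cap C_j\ne\varnothing$ then circuit elimination already forces a third circuit inside $C_i\cup C_j$; while if the $C_i$ are pairwise disjoint, their union has size $3(r-2)$ in a matroid of rank $r-2$, which forces many additional circuits. The paper sidesteps this entirely: it never enumerates the circuits inside the relevant unions. Instead it exploits that any admissible $N$ must satisfy condition $(*')$, so for a modular pair $\{C_i,C_j\}$ every circuit of $K/X$ in $C_i\cup C_j$ lies in $\cl_N(\{C_i,C_j\})$, whence $r_N(\{C_i,C_j\})$ equals the rank of the full set of such circuits. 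The paper then uses a \emph{chain} $C_1,\dots,C_t$ rather than a triangle: consecutive unions $C_i\cup C_{i+1}$ are made circuit-hyperplanes of $K$ (giving rank difference $1$, hence parallelism in $N$), while $C_1\cup C_t$ is a basis of $K$ (giving rank difference $2$, hence independence in $N$), and transitivity of parallelism provides the contradiction. This chain structure is exactly what lets the construction scale to arbitrary $r$.
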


This family of matroids may be of independent interest: they form an infinite antichain of non-representable sparse paving matroids that do not violate Ingleton's inequality.

Part of the motivation of~\cite{walsh2022new} was to generalize Zaslavsky's application of Theorem \ref{brylawski} to gain graphs.
A \emph{gain graph} is a pair $(G,\phi)$ where $G$ is a graph and $\phi$ is a \emph{gain function} that orientably labels the edges of $G$ by elements of a group $\Gamma$.
Zaslavsky \cite{zaslavsky1991liftmatroids} famously applied Theorem \ref{brylawski} to gain graphs by showing that for each gain function on a graph $G$ one can naturally construct a linear class $\mathcal B$ of circuits of $M(G)$, the graphic matroid of $G$.
The circuits in $\mathcal B$ are the \emph{balanced} cycles of $G$ with respect to the gain function, and the pair $(G,\mathcal B)$ is a \emph{biased graph}.
The elementary lift $M(G,\phi)$ of $M(G)$ obtained from applying Theorem \ref{brylawski} with the linear class $\mathcal B$ is the \emph{lift matroid} of $(G,\mathcal B)$, and it follows from Theorem \ref{brylawski} that a cycle of $G$ is a circuit of $M(G,\phi)$ if and only if it is balanced.
We direct the reader to \cite{group-labelings,walsh2022new,zaslavsky1991liftmatroids} for more background on gain graphs and lift matroids.

For certain groups Walsh \cite{walsh2022new} generalized this construction by defining a matroid $N$ on the circuits of $M(G)$ that satisfies the hypothesis of Theorem \ref{thm: new lift construction}.
To avoid technicalities, we only consider the \emph{full $\Gamma$-gain graph} graph $(K_n^{\Gamma}, \phi_n^{\Gamma})$ for a finite group $\Gamma$, where $K_n^{\Gamma}$ has vertex set $[n]$ and edge set ${[n]\choose 2}\times \Gamma$, and the gain function $\phi_n^{\Gamma}$ orients edge $(\{i,j\}, \gamma)$ from $i$ to $j$ when $i <j$ and assigns the label $\gamma$.
The previous version of Theorem~\ref{thm: new lift construction} was applied to gain graphs
to get the following, where $\mathbb{Z}_p^j$ denotes the direct sum of $j$ copies of the cyclic group of order $p$.

\begin{thm}[{\cite[Theorem 3]{walsh2022new}}] \label{groups construction}
Let $p$ be a prime, and let $n\ge 3$ and $j\ge 2$ be integers.
For each integer $i$ with $1\le i\le j$, there is a rank-$i$ lift $M$ of the cycle matroid $M(K_n^{\bZ_p^j})$ of $K_n^{\bZ_p^j}$ so that a cycle of $K_n^{\bZ_p^j}$ is a circuit of $M$ if and only if it is balanced.
\end{thm}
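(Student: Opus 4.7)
The plan is to build a rank-$i$ matroid $N$ on the cycle set of $M(K_n^{\bZ_p^j})$ whose loops are exactly the balanced cycles and which satisfies the hypothesis of Theorem~\ref{thm: new lift construction}. Applying Theorem~\ref{thm: new lift construction} then yields a rank-$i$ lift, and the formula $r(C)=r_M(C)+r_N(\{C\})$ shows that a graph cycle $C$ is a circuit of the lift precisely when it is a loop of $N$, i.e., precisely when $C$ is balanced (and any proper subset, being a forest, contributes no circuits to $N$).

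First I would construct a rank-$j$ candidate $N_j$ via a vector representation over $\bF_p$. For each cycle $C$ of $K_n^{\bZ_p^j}$, pick an orientation and set $v_C=\phi_n^{\bZ_p^j}(C)\in\bF_p^j$; reversing orientation only negates $v_C$ and leaves the resulting vector matroid unchanged. The loops of $N_j$ are then exactly the balanced cycles, and using digons on $\{1,2\}$ formed by the edge labeled $0$ together with the edge labeled $\gamma$, every $\gamma\in\bF_p^j$ arises as a cycle gain, so $r(N_j)=j$. To check the hypothesis of Theorem~\ref{thm: new lift construction} for $N_j$, consider cycles $C_1,C_2$ with $|C_1\cup C_2|-r_M(C_1\cup C_2)=2$. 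The cycle space of the subgraph on edge set $C_1\cup C_2$ has dimension $2$ over $\bF_p$; since $C_1$ and $C_2$ are distinct circuits of the graphic matroid, their cycle-space vectors have different supports and are therefore $\bF_p$-linearly independent, so they form a basis. Because $\bZ_p^j$ is an elementary abelian $p$-group, the gain function extends to an $\bF_p$-linear map from the cycle space to $\bF_p^j$, and so any cycle $C\subseteq C_1\cup C_2$ satisfies $\phi(C)=a\phi(C_1)+b\phi(C_2)\in\cl_{N_j}(\{C_1,C_2\})$ for some $a,b\in\bF_p$.

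To obtain a general $i$ with $1\le i\le j$, let $N$ be the $(j-i)$-fold truncation of $N_j$. Truncation preserves loops and can only enlarge closures: $\cl_N(\{C_1,C_2\})$ equals $\cl_{N_j}(\{C_1,C_2\})$ when $r_{N_j}(\{C_1,C_2\})<i$ and equals the entire ground set otherwise, so the hypothesis of Theorem~\ref{thm: new lift construction} is inherited from $N_j$. Since $r(N)=i$ and the loops of $N$ are still precisely the balanced cycles, Theorem~\ref{thm: new lift construction} produces the required rank-$i$ lift.

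The main obstacle is the verification that $\phi(C)\in\cl_{N_j}(\{C_1,C_2\})$ in the theta-like case. This rests on identifying the gain map with a genuinely $\bF_p$-linear map out of the cycle space over $\bF_p$, which is only available because the target group is an elementary abelian $p$-group; once this identification is in place, the matroid conditions reduce to a standard cycle-space dimension count, and the rest of the argument is routine bookkeeping about truncation and the lift formula.
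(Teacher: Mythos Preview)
Your proposal is correct. Note, however, that this paper does not itself prove Theorem~\ref{groups construction}; it is quoted from \cite{walsh2022new}, and the paper only remarks that ``the previous version of Theorem~\ref{thm: new lift construction} was applied to gain graphs'' to obtain it. Your argument is precisely that application, carried out with the simplified hypothesis of Theorem~\ref{thm: new lift construction}: you represent each cycle $C$ by its gain $\phi(C)\in\bF_p^j$, use the $\bF_p$-linearity of the gain map on the cycle space (available exactly because $\bZ_p^j$ is elementary abelian) to verify the modular-pair condition, and then truncate to reach intermediate ranks~$i$. The truncation step is a clean way to handle $1\le i<j$ since it preserves loops and only enlarges closures; this is entirely in the spirit of the cited construction, and nothing further is needed.
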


Surprisingly, it was shown that for finite abelian groups such a construction is only possible for groups of this form; namely, the additive group of a non-prime finite field.

\begin{thm}[{\cite[Theorem 4]{walsh2022new}}] \label{abelian groups converse}
Let $\Gamma$ be a nontrivial finite abelian group, and let $n\ge 3$ be an integer.
Let $M$ be a lift of the cycle matroid $M(K_n^{\Gamma})$ so that a cycle of $K_n^{\Gamma}$ is a circuit of $M$ if and only if it is balanced.
Then either $\Gamma\cong \bZ_p^j$ for some prime $p$ and integer $j\ge 2$, or $M$ is an elementary lift of $M(K_n^{\Gamma})$.
\end{thm}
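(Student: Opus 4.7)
The plan is to assume that $M$ is a non-elementary lift of $M(K_n^{\Gamma})$ and deduce that $\Gamma\cong\bZ_p^j$ with $j\ge 2$. Because $M(K_n^{\Gamma})$ is graphic, hence representable, the first main theorem of the paper lets us write $M=M(K_n^{\Gamma})^N$ for some matroid $N$ on the cycles of $K_n^{\Gamma}$ satisfying the hypothesis of Theorem~\ref{thm: new lift construction}, with $r(N)\ge 2$. The balanced-cycle assumption on $M$ translates to: the loops of $N$ are exactly the balanced cycles of $(K_n^{\Gamma},\phi_n^{\Gamma})$.

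The central tool is the theta-subgraph closure built into Theorem~\ref{thm: new lift construction}: whenever three cycles $C_1,C_2,C_3$ form a theta subgraph, $C_3\in\cl_N(\{C_1,C_2\})$; on the gain-graph side, a theta enforces the defect relation $\delta(C_3)=\delta(C_1)\pm\delta(C_2)$, where $\delta(C)\in\Gamma$ is the group element obtained by summing oriented labels around $C$ (well-defined up to sign). Using the fact that $n\ge 3$ provides enough room to realize every pair of triangle defects by triangles sharing an edge, I would establish: (i) for each $\delta\in\Gamma$, all cycles of defect $\delta$ lie in a common parallel class $F_\delta$ of $N$, with $F_0$ the set of loops; and (ii) $F_{\delta_1+\delta_2}\subseteq\cl_N(F_{\delta_1}\cup F_{\delta_2})$ for all $\delta_1,\delta_2\in\Gamma$. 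Together (i) and (ii) make $\delta\mapsto F_\delta$ a homomorphism-like map from $\Gamma$ into the flat lattice of $N$.

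To finish, I would show that if $\Gamma$ is not elementary abelian, then $r(N)\le 1$. Iterating (ii) yields $\cl_N(F_\delta)\supseteq\{F_{k\delta}:k\in\bZ\}$ and more generally $\cl_N(F_{\delta_1}\cup F_{\delta_2})\supseteq\{F_{k_1\delta_1+k_2\delta_2}\}$. If $\Gamma$ is cyclic and not of prime order, a generator $g$ gives $\cl_N(F_g)\supseteq\{F_\delta:\delta\in\Gamma\}$, collapsing $N$ to rank $1$; the Chinese remainder theorem extends this to $\bZ_p\oplus\bZ_q$ for distinct primes $p,q$ via the order-$pq$ element $g+h$. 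For a non-elementary-abelian $p$-group, which necessarily contains an element of order $p^2$, one produces a non-loop $F_{\delta^*}$ lying in two distinct rank-$1$ closures $\cl_N(F_{\delta_1})$ and $\cl_N(F_{\delta_2})$, contradicting the disjointness of parallel classes. Since these cases exhaust the finite abelian groups not of the form $\bZ_p^j$, we conclude $\Gamma\cong\bZ_p^j$, and $r(N)\ge 2$ forces $j\ge 2$.

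The main obstacle is securing claims (i) and (ii). Cycles can be arbitrarily long, so I would need to shorten them via iterated theta reductions and verify that the many theta relations produced by all pairs of triangles in $K_n^{\Gamma}$ are mutually consistent, so that each $F_\delta$ is well-defined as a single parallel class. Once (i) and (ii) are in hand, the algebraic case analysis on $\Gamma$ in the final step is essentially a consequence of the structure theorem for finite abelian groups.
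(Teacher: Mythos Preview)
First, note that the paper does not prove this theorem: it is quoted from \cite{walsh2022new}, and is used as a black box in the proof of Theorem~\ref{thm: 4 or more}. So there is no proof in this paper to compare your proposal against directly, though the proof of Theorem~\ref{thm: 4 or more} (together with the lemmas it cites from \cite{walsh2022new}) indicates how the original argument runs.

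Your proposal has a genuine gap at the very first step. You invoke Proposition~\ref{prop: representable lifts} to write $M=M(K_n^{\Gamma})^N$, on the grounds that $M(K_n^{\Gamma})$ is graphic and hence representable. But Proposition~\ref{prop: representable lifts} requires the \emph{extension} matroid $K$ (with $K/X=M(K_n^{\Gamma})$ and $K\del X=M$) to be representable, not merely $K/X$. Nothing in the hypotheses guarantees that $M$, let alone $K$, is representable, and the whole point of Section~\ref{sec: the converse} is that lifts not of the form $(K/X)^N$ do exist. Indeed, the remark following Theorem~\ref{thm: partition implies higher rank lifts} says explicitly that the rank-$2$ lifts in $\cM_{3,\Gamma}$ for non-abelian $\Gamma$ cannot be realized as $M(K_3^{\Gamma})^N$ for any $N$; you would need a separate argument to rule this out in the abelian case, and you give none.

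The approach in \cite{walsh2022new}, as reflected in the proof of Theorem~\ref{thm: 4 or more}, bypasses $N$ entirely and works with $r_M$ directly: one sets $\alpha\sim\beta$ for $\alpha,\beta\in\Gamma-\{\ep\}$ iff $r_M(E_{\{\alpha,\beta,\ep\}})=n$, shows this is an equivalence relation whose classes (together with $\ep$) are subgroups, and thus obtains a nontrivial group partition of $\Gamma$ whenever $r(M)-r(M(K_n^{\Gamma}))\ge 2$. For abelian $\Gamma$, a nontrivial group partition forces $\Gamma\cong\bZ_p^j$ with $j\ge 2$. Your theta-relation intuition and your claims (i)--(ii) are morally this same partition argument, but phrased through an auxiliary $N$ you have not shown exists; if you rewrite (i) and (ii) as statements about $r_M(E_A)$ for subsets $A\subseteq\Gamma$, the argument can be made to work. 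Your final case analysis is also looser than needed: rather than handling cyclic groups, $\bZ_p\oplus\bZ_q$, and non-elementary $p$-groups separately (the last case as written does not yield a contradiction), it is cleaner to quote the classification of finite abelian groups admitting a nontrivial partition.
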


It was conjectured in \cite[Conj. 25]{walsh2022new} that this holds more generally for every nontrivial finite group.
For our third and final main result we show that this conjecture is true when $n \ge 4$ and is false when $n = 3$.
A \emph{group partition} of a group $\Gamma$ is a partition of the non-identity elements of $\Gamma$
into sets $A_1,\dots,A_k$ such that each $A_i \cup \{\epsilon\}$ is a subgroup of $\Gamma$ for all $i \in [k]$, where $\epsilon$ is the identity element of $\Gamma$.
The partition is \emph{nontrivial} if it has more than one part.

\begin{thm} \label{all groups converse}
Let $\Gamma$ be a nontrivial finite group and let $n \ge 3$.
Let $\mathcal{M}_{n,\Gamma}$ be the class of lifts $M$ of $M(K_n^{\Gamma})$ so that a cycle of $K_n^{\Gamma}$ is a circuit of $M$ if and only if it is balanced.
Then $\mathcal{M}_{n,\Gamma}$ contains a non-elementary lift in precisely the following cases:
\begin{enumerate}
    \item $n = 3$ and $\Gamma$ has a nontrivial partition, or
    \item $n \ge 4$ and $\Gamma = \mathbb{Z}_p^j$ for some prime $p$ and $j \ge 2$.
\end{enumerate}
\end{thm}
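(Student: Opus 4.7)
Since $M(K_n^{\Gamma})$ is graphic, the paper's result that Walsh's conjecture holds for representable matroids implies that every lift of $M(K_n^{\Gamma})$ has the form $M(K_n^{\Gamma})^N$ for some matroid $N$ on the cycles of $K_n^{\Gamma}$ satisfying the hypothesis of Theorem~\ref{thm: new lift construction}. The requirement that a cycle of $K_n^{\Gamma}$ be a circuit of the lift if and only if it is balanced translates to: the loops of $N$ are precisely the balanced cycles, and the lift is non-elementary if and only if $r(N)\ge 2$. The proof splits into existence and non-existence for cases~(1) and~(2).

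\textbf{Existence.} Case~(2) is Theorem~\ref{groups construction}. For case~(1), given a nontrivial partition $A_1,\dots,A_k$ of $\Gamma\setminus\{\epsilon\}$, I construct a rank-$2$ matroid $N$ on the cycles of $K_3^{\Gamma}$ whose loops are the balanced cycles and whose non-loop parallel classes are indexed by $\{1,\dots,k\}$. Each unbalanced cycle $C$ is assigned a canonical gain by fixing a basepoint (vertex $1$ when $C$ is incident to $1$, else vertex $2$) and an orientation; $C$ is placed in class $i$ when this gain lies in $A_i$, which is well-defined because each $A_i\cup\{\epsilon\}$ is closed under inversion. Every theta in $K_3^{\Gamma}$ is of one of two shapes---three parallel edges on a single pair (type A), or a digon plus a length-$2$ path through the remaining vertex (type B)---and its three cycle gains at any common basepoint satisfy $c_3=c_1c_2$. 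The hypothesis of Theorem~\ref{thm: new lift construction} then reduces to: two cycles in the same class $A_i$ force the third into $A_i$, which is immediate from the subgroup property of $A_i\cup\{\epsilon\}$. The one delicate subcase is a type-B theta with digon on $\{2,3\}$ and triangles based at~$1$; here the relation becomes $b_1 b_2^{-1}=\gamma_{12}a\gamma_{12}^{-1}$, and the theta hypothesis still holds because any would-be violation requires $\gamma_{12}$ to conjugate $a$ out of $A_i$, in which case $b_1,b_2$ land in distinct classes and the rank-$2$ closure of any two of $\{a,b_1,b_2\}$ is already all of $N$.

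\textbf{Non-existence.} Suppose $M(K_n^{\Gamma})^N$ is non-elementary, so $r(N)\ge 2$. For case~(1), fix a pair $\{1,2\}$ in $K_3^{\Gamma}$ and define an equivalence on $\Gamma\setminus\{\epsilon\}$ by declaring $g\sim h$ if and only if the digons on $\{1,2\}$ with gains $g$ and $h$ lie in a common parallel class of $N$; type-A thetas on $\{1,2\}$ with labels $\epsilon,g,gh$ force digons of equal gain into a common parallel class, so $\sim$ descends to $\Gamma\setminus\{\epsilon\}$. The identity $(\gamma_1\gamma_2^{-1})(\gamma_2\gamma_3^{-1})=\gamma_1\gamma_3^{-1}$ applied to type-A thetas, combined with the hypothesis of Theorem~\ref{thm: new lift construction}, shows that each equivalence class together with $\{\epsilon\}$ is closed under multiplication and inversion, hence is a subgroup of $\Gamma$. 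A short further argument using type-B thetas---and triangles when the digons on $\{1,2\}$ all fall in a single class of $N$---upgrades this to a partition of $\Gamma\setminus\{\epsilon\}$, nontrivial because $r(N)\ge 2$ produces at least two parallel classes somewhere in $N$. For case~(2), abelian $\Gamma$ is handled directly by Theorem~\ref{abelian groups converse}. For non-abelian $\Gamma$ with $n\ge 4$, I exploit the \emph{type-C} thetas---two internally vertex-disjoint length-$2$ paths between a pair of vertices, available only when $n\ge 4$---to obtain commutator constraints: for non-commuting $a,b\in\Gamma$ realized as cycle gains, a carefully chosen family of type-C thetas produces cycle-gain equations whose joint consistency under the hypothesis of Theorem~\ref{thm: new lift construction} forces $ab=ba$, a contradiction. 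Thus $\Gamma$ is abelian and Theorem~\ref{abelian groups converse} concludes.

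\textbf{Main obstacle.} The hardest step is the non-abelian half of the case-(2) non-existence argument: turning the algebraic inequality $ab\ne ba$ into a genuine matroid contradiction via an explicit type-C theta configuration, uniformly for every non-abelian finite $\Gamma$ and every $n\ge 4$. A secondary but nontrivial obstacle is verifying the hypothesis of Theorem~\ref{thm: new lift construction} in the case-(1) existence construction when the partition contains non-normal parts, because conjugation of cycle gains between differing canonical basepoints must be absorbed by the rank-$2$ closure of $N$ rather than by the subgroup structure of any individual $A_i\cup\{\epsilon\}$.
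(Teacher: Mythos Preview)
Your proposal has a fundamental gap in its opening reduction. You claim that because $M(K_n^{\Gamma})$ is graphic (hence representable), Proposition~\ref{prop: representable lifts} forces every lift in $\cM_{n,\Gamma}$ to have the form $M(K_n^{\Gamma})^N$. But Proposition~\ref{prop: representable lifts} requires the \emph{extension} $K$ (with $K/X = M(K_n^{\Gamma})$ and $K\del X = M$) to be representable, not merely the contraction $K/X$. An arbitrary lift $M$ of a representable matroid need not arise from a representable $K$, so there is no reason every $M\in\cM_{n,\Gamma}$ should be an $M^N$. In fact the paper remarks explicitly, right after the proof of Theorem~\ref{all groups converse}, that for non-abelian $\Gamma$ the rank-$2$ lift it builds in Theorem~\ref{thm: partition implies higher rank lifts} \emph{cannot} be realized as $M(K_3^{\Gamma})^N$ for any $N$. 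This single remark simultaneously breaks both halves of your plan: your case-(1) existence construction via a rank-$2$ matroid $N$ cannot succeed for non-abelian $\Gamma$, and your entire non-existence framework for case~(2)---which proceeds by analyzing $N$ and invoking the hypothesis of Theorem~\ref{thm: new lift construction}---does not apply to lifts that are not of the form $M^N$.

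The specific failure of your case-(1) construction is exactly the ``secondary obstacle'' you flagged but did not resolve. In a type-B theta with digon on $\{2,3\}$, if the two triangle gains (based at vertex~$1$) lie in the same part $A_j$, their quotient $\gamma\,(\alpha\beta^{-1})\,\gamma^{-1}$ lies in $A_j\cup\{\ep\}$; but the digon gain $\alpha\beta^{-1}$ (based at vertex~$2$) lies in $\gamma^{-1}A_j\gamma$, which for a non-normal part need not be $A_j$. Then the two triangles are parallel non-loops in your rank-$2$ $N$ while the digon sits in a different parallel class, so the digon is not in $\cl_N$ of the two triangles and $(*')$ fails. The paper avoids this entirely: for existence it specifies the hyperplanes of the lift directly (Theorem~\ref{thm: partition implies higher rank lifts}), using the \emph{primitive} partition and its conjugation-stability (Proposition~\ref{prop: primitive partition is normal}) to control switching; for non-existence when $n\ge 4$ it works with the lift $M$ itself (Theorem~\ref{thm: 4 or more}), deriving a hyperplane-axiom violation from a putative non-normal part via switching and submodularity, and then invoking Theorem~\ref{abelian groups converse} once $\Gamma$ is shown abelian.
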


We prove Theorem \ref{thm: new lift construction} in Section \ref{sec:simplified construction} and Theorem \ref{counterexample} in Section \ref{sec: the converse}.
One direction of Theorem \ref{all groups converse} was given in~\cite{walsh2022new} as Lemma~21;
we prove the converse direction in Section~\ref{sec: gain graphs}.
We follow the notation and terminology of Oxley \cite{oxley2006matroid}.

\section{A simplified construction} \label{sec:simplified construction}

We now state the original construction from \cite{walsh2022new} and then show that it is equivalent to Theorem \ref{thm: new lift construction}.
For a collection $\mathcal X$ of sets we write $\cup \mathcal X$ for $\cup_{X \in \mathcal X}X$.
A collection $\cC'$ of circuits of a matroid $M$ is \emph{perfect} if $|\cup \cC'| - r_M(\cup \cC') = |\cC'|$ and no circuit in $\cC'$ is contained in the union of the others.
Equivalently, $\cC'$ is contained in the collection of fundamental circuits with respect to a basis of $M$, because the set obtained from $\cup\cC'$ by deleting one element from each circuit that is not in any other circuit in $\cC'$ is independent in $M$.
Note that a pair $\{C_1, C_2\}$ is perfect if and only if $|C_1 \cup C_2| - r_M(C_1 \cup C_2) = 2$; 
in this case, we say that $\{C_1, C_2\}$ is a \emph{modular pair}.

In order to prove Theorem~\ref{thm: new lift construction},
we need to recall the lift construction from~\cite{walsh2022new}.

\begin{thm}[{\cite[Theorem 2]{walsh2022new}}] \label{old lift construction}
Let $M$ be a matroid on ground set $E$, and let $N$ be a matroid on the set of circuits of $M$ so that 
\begin{enumerate}[$(\ref{old lift construction}*)$]
\item if $\cC'$ is a perfect collection of circuits of $M$, then each circuit $C$ of $M$ contained in $\cup\cC'$ satisfies $C\in\cl_N(\cC')$.
\end{enumerate}
Then the function $r: 2^E \rightarrow \mathbb{Z}$ defined, for all $X \subseteq E$, by
$$r(X)=r_M(X)+r_N(\{C \colon \textrm{$C$ is a circuit of $M|X$}\})$$ 
is the rank function of a rank-$r(N)$ lift of $M$.
\end{thm}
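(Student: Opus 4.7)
The plan is to prove Theorem~\ref{old lift construction} in two stages. First, verify that the function $r$ defines a matroid $L$ on $E$. Second, realize $L$ as a rank-$r(N)$ lift of $M$ by explicitly constructing a matroid $K$ on $E \sqcup F$ with $K/F = M$ and $K \setminus F = L$, where $F$ is a set of $r(N)$ new elements identified with a chosen basis $B_N$ of $N$.

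For the first stage, the matroid axioms to check are normalization, monotonicity, submodularity, and unit increments. Writing $\cC_Z$ for the set of circuits of $M|Z$, normalization is trivial and monotonicity is immediate from $X \subseteq Y \Rightarrow \cC_X \subseteq \cC_Y$. Submodularity of $r$, combining submodularity of $r_M$ and $r_N$ with the identity $\cC_X \cap \cC_Y = \cC_{X \cap Y}$, reduces to showing that every circuit $C$ of $M$ with $C \subseteq X \cup Y$ lies in $\cl_N(\cC_X \cup \cC_Y)$. This is where hypothesis $(\ref{old lift construction}*)$ enters: extract a perfect subcollection $\cC' \subseteq \cC_X \cup \cC_Y$ whose union contains $C$, for instance by choosing a basis $B_0$ of $M|(X \cap Y)$, extending to bases $B_X \supseteq B_0$ of $M|X$ and $B_Y \supseteq B_0$ of $M|Y$, pruning $B_X \cup B_Y$ down to a basis of $M|(X \cup Y)$, and collecting fundamental circuits. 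Then $(\ref{old lift construction}*)$ yields $C \in \cl_N(\cC')$. Unit increments reduce to a similar application of $(\ref{old lift construction}*)$ on modular pairs of circuits through a newly added element.

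For the second stage, define $r_K(X \cup Y) = r_M(X) + r_N(\cC_X \cup S_Y)$ for $X \subseteq E$ and $Y \subseteq F$, where $S_Y \subseteq B_N$ is the subset of the chosen basis corresponding to $Y$. An analogous submodularity argument, again invoking $(\ref{old lift construction}*)$ to control the $\cC_{X \cup Y}$ term, shows that $r_K$ is a matroid rank function on $E \sqcup F$. The direct computations $r_K(F) = r(N)$, $r_K(X \cup F) = r_M(X) + r(N)$, and $r_K(X) = r(X)$ then give $r_{K/F}(X) = r_K(X \cup F) - r_K(F) = r_M(X)$ and $r_{K \setminus F}(X) = r(X)$, so $K/F = M$ and $K \setminus F = L$, exhibiting $L$ as a rank-$r(N)$ lift of $M$.

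\textbf{Main obstacle.} The principal technical hurdle is the submodularity step, specifically extracting a perfect subcollection of $\cC_X \cup \cC_Y$ whose union contains a given circuit $C \subseteq X \cup Y$. Perfect collections are defined via fundamental circuits of a single basis, but $\cC_X$ and $\cC_Y$ naturally arise from bases of different submatroids, and when a basis of $M|(X \cup Y)$ is chosen its fundamental circuits need not themselves lie in $\cC_X \cup \cC_Y$. Aligning bases of $M|X$, $M|Y$, and $M|(X \cup Y)$ through a common basis of $M|(X \cap Y)$, and tracking which fundamental circuits land in $\cC_X$ versus $\cC_Y$ during the pruning step, is the main combinatorial challenge of the argument.
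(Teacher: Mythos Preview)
The paper does not itself prove Theorem~\ref{old lift construction}; it is quoted from \cite{walsh2022new} and combined with Proposition~\ref{simpler perfect} to obtain Theorem~\ref{thm: new lift construction}. So there is no in-paper proof to compare against, but your plan has a genuine gap in the submodularity step.

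You reduce submodularity of $r$ to the claim that every circuit $C \subseteq X \cup Y$ lies in $\cl_N(\cC_X \cup \cC_Y)$, treating the $r_M$ and $r_N$ contributions as separately submodular. This reduced claim is false. Take $M = U_{2,5}$ on $[5]$, $X=\{1,2,3\}$, $Y=\{3,4,5\}$, and let $N$ be the rank-$1$ matroid on the ten $3$-subsets in which $\{1,2,3\}$ and $\{3,4,5\}$ are loops and the other eight circuits are parallel non-loops. One checks that $N$ satisfies $(\ref{old lift construction}*)$: every perfect pair or triple of circuits of $U_{2,5}$ that contains a loop of $N$ also contains a non-loop, whose $N$-closure is already everything. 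But $\cC_X\cup\cC_Y=\{\{1,2,3\},\{3,4,5\}\}$ consists only of loops, so $\cl_N(\cC_X\cup\cC_Y)$ misses the non-loop $\{1,3,4\}\in\cC_{X\cup Y}$, and indeed $r_N(\cC_X)+r_N(\cC_Y)=0<1=r_N(\cC_{X\cup Y})+r_N(\cC_{X\cap Y})$. Submodularity of $r$ survives only because of the slack $r_M(X)+r_M(Y)-r_M(X\cup Y)-r_M(X\cap Y)=1$ on the $r_M$ side, which your reduction throws away. The ``main obstacle'' you flag---extracting a perfect subcollection of $\cC_X\cup\cC_Y$ covering $C$---is therefore not a technical hurdle to be overcome but a symptom that the reduction itself is wrong. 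The standard repair is to prove the rank axioms via single-element increments: split on whether $e\in\cl_M(X)$; when $e\notin\cl_M(X)$ the $r_M$ term jumps and $\cC_{X\cup e}=\cC_X$, and when $e\in\cl_M(X)$ apply $(\ref{old lift construction}*)$ to the fundamental circuits of a basis of $M|X$ together with the one new fundamental circuit $C_e$ to get $\cC_{X\cup e}\subseteq\cl_N(\cC_X\cup\{C_e\})$. Local submodularity then follows by tracking this single $C_e$ through the two increments $X\to X\cup e$ and $X\cup f\to X\cup e\cup f$. Your second-stage construction of $K$ is fine once stage one is fixed.
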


Theorem \ref{thm: new lift construction} follows from Theorem \ref{old lift construction} and the following proposition, which shows that (\ref{old lift construction}$*$) and the hypothesis from Theorem \ref{thm: new lift construction} are equivalent.

\begin{prop} \label{simpler perfect}
Let $M$ be a matroid and let $N$ be a matroid on the circuits of $M$.
The following are equivalent
\begin{enumerate}
\item[$(*')$] For every modular pair $\{C_1, C_2\}$ of circuits of $M$, each circuit $C$ of $M$ contained in $C_1 \cup C_2$ satisfies $C \in \cl_N(\{C_1, C_2\})$.
\item[$(*)$] For every perfect collection $\cC'$ of circuits of $M$, each circuit $C$ of $M$ contained in $\cup\cC'$ satisfies $C\in \cl_{N}(\cC')$.
\end{enumerate}
\end{prop}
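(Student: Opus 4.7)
My plan splits into two implications. The direction $(*)\Rightarrow(*')$ is immediate: a modular pair $\{C_1,C_2\}$ is by definition a perfect collection of size two, so $(*)$ applied to $\cC'=\{C_1,C_2\}$ gives exactly $(*')$. All the work is in the converse.

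For $(*')\Rightarrow(*)$, I induct on $k=|\cC'|$. The cases $k=1$ (which forces $C=C_1$) and $k=2$ (exactly $(*')$) are immediate. For $k\ge 3$, write $\cC'=\{C_1,\dots,C_k\}$ and fix a basis $B$ of $M|(\cup\cC')$ realizing the perfectness, with $\cup\cC'\setminus B=\{e_1,\dots,e_k\}$ and $C_i$ the fundamental circuit of $e_i$ with respect to $B$. The first step is a purely matroid-theoretic lemma: every circuit $C\subseteq\cup\cC'$ satisfies
$$C \;\subseteq\; \bigcup_{i\,:\,e_i\in C}C_i.$$
I would prove this by contradiction: if some $b\in C\cap B$ lies in no $C_i$ with $e_i\in C$, then every such $e_i$ lies in $\cl_M(B\setminus\{b\})$, so $C\setminus\{b\}\subseteq\cl_M(B\setminus\{b\})$, forcing $b\in\cl_M(C\setminus\{b\})\subseteq\cl_M(B\setminus\{b\})$ and contradicting independence of $B$.

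Letting $I=\{i:e_i\in C\}$, if $I\subsetneq\{1,\dots,k\}$ then the lemma places $C$ inside the union of the (still perfect) sub-collection $\{C_i\}_{i\in I}$, whose size is less than $k$, and the inductive hypothesis gives $C\in\cl_N(\{C_i\}_{i\in I})\subseteq\cl_N(\cC')$. The only remaining case is $I=\{1,\dots,k\}$, where $C$ contains every non-basis element of $\cup\cC'$. Here I plan to produce two circuits $D,D'$ of $M$ such that $\{D,D'\}$ is a modular pair, $C\subseteq D\cup D'$, and each of $D,D'$ has strictly fewer than $k$ non-basis elements with respect to $B$. Granting this, the already-handled sub-case applied to $D$ and $D'$ puts both in $\cl_N(\cC')$, and then $(*')$ applied to $\{D,D'\}$ yields $C\in\cl_N(\{D,D'\})\subseteq\cl_N(\cC')$.

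The main obstacle is constructing this pair $D,D'$. My intended tool is strong circuit elimination applied to $C$ twice, each time paired with a carefully chosen fundamental circuit $C_i$; eliminating $e_i$ from $C\cup C_i$ produces a circuit with strictly fewer non-basis elements. The delicate part is choosing the two indices $i$ and the retained elements so that the two outputs simultaneously form a modular pair and together cover $C$; verifying the rank identity $|D\cup D'|-r_M(D\cup D')=2$ while maintaining $C\subseteq D\cup D'$ is where the technical work lies.
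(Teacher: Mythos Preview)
Your overall architecture matches the paper's: induction (equivalently, a minimal counterexample) on $|\cC'|$, the realization of a perfect collection as the set of fundamental circuits with respect to a basis $B$ of $M|(\cup\cC')$, the reduction to the case where $C$ contains every non-basis element $e_i$, and the plan to exhibit a modular pair $\{D,D'\}$ with $C\subseteq D\cup D'$ and each of $D,D'$ contained in the union of a proper sub-collection. The gap is exactly where you flag it: you have not actually constructed $D$ and $D'$, and the mechanism you propose does not obviously succeed. Strong circuit elimination on $C$ and $C_i$ yields a circuit avoiding $e_i$ and containing one prescribed element, but gives no control over the remainder of the circuit, and in particular no control over $|D\cup D'|-r_M(D\cup D')$. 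Already in $U_{2,k+2}$ with $B$ a two-element basis and $C=\{e_1,\dots,e_k\}$, a generic pair of circuits produced by elimination has union of corank at least $3$; the ``careful choice'' you allude to is precisely the missing content.

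The paper supplies the missing idea as follows. First one finds \emph{any} circuit $C_0$ of $M|(\cup\cC')$ with $|C\cup C_0|-r_M(C\cup C_0)=2$; this exists because $M|(\cup\cC')$ has corank at least two (extend $C\setminus\{e_1\}$ to a basis and take the fundamental circuit of any other non-basis element). One then works entirely inside the corank-$2$ set $S=C\cup C_0$: for each $x\in S$ the set $S\setminus\{x\}$ has corank at most one and hence contains at most one circuit. Take $x_1:=e_1$ and let $D$ be the unique circuit in $S\setminus\{e_1\}$; since $B$ is independent, $D$ contains some $e_j$, and one lets $D'$ be the unique circuit in $S\setminus\{e_j\}$. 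The three circuits $C,D,D'$ are pairwise distinct (they differ on $\{e_1,e_j\}$), so $D\cup D'$ has corank at least $2$ inside the corank-$2$ set $S$, forcing $D\cup D'=S\supseteq C$ and making $\{D,D'\}$ a modular pair automatically. Each of $D,D'$ omits some $e_i$, hence by your own lemma lies in $\cup(\cC'\setminus\{C_i\})$, and the induction closes. The key point you are missing is to restrict to a corank-$2$ subset containing $C$ \emph{before} producing the two auxiliary circuits; this is what makes the modular-pair condition come for free.
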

\begin{proof}
The implication $(*)\implies(*')$ is immediate. We now show $(*')\implies(*)$.
Assume $(*')$ and let $\cC'$ be minimal so that $(*)$ is false for $\cC'$.
Then $|\cC'| > 1$.
Let $C$ be a circuit of $M$ contained in $\cup\cC'$.
Every subset of $\cC'$ is a perfect collection of circuits of $M$;
we will make repeated tacit use of this fact.
If there is some $C' \in \cC'$ so that $C$ is contained in $\cup (\cC' - \{C'\})$, then $C \in \cl_N(\cC' - \{C'\})$ by the minimality of $\cC'$, and therefore $C \in \cl_N(\cC')$, as desired.
So for each $C'\in \cC'$, there is at least one element in $C'\cap C$ that is not in any other circuit in $\cC'$.
Let $X$ be a transversal of $\{(C'\cap C) - \cup (\cC' - \{C'\}) \colon C' \in \cC'\}$, and note that $X\subseteq C$.
Then 
\setcounter{equation}{0}
\begin{align}
|(\cup\cC')-X| &= |\cup \cC'| - |X| \\
&= r_{M}(\cup \cC') + |\cC'| - |X| \\
&= r_{M}(\cup \cC') \\
&= r_M((\cup\cC')-X),
\end{align}
so $(\cup\cC')-X$ is independent in $M$.
Line (2) holds because $\cC'$ is perfect, line (3) holds because $|X| = |\cC'|$, and line (4) holds because each element in $X$ is in a unique circuit of $M|(\cup \cC')$, so $X \subseteq \cl_M((\cup \cC') - X)$.

We claim that $M|(\cup\cC')$ is connected.
Otherwise, $C$ is contained in the union of a proper subset $\cC''$ of $\cC'$
since each each circuit of a matroid is contained in a single connected component.
Minimality of $\cC'$ then implies $C \in \cl_N(\cC'') \subseteq \cl_N(\cC')$.

Since $M|(\cup\cC')$ is connected and has corank at least two, there is a circuit $C_0$ of $M|(\cup\cC')$ so that $|C \cup C_0| - r_M(C \cup C_0) = 2$ and $C \cap C_0 \ne \varnothing$.
Let $x_1 \in X$.
Then the matroid $M|((C \cup C_0) - x_1)$ has corank one and thus contains a unique circuit $C_1$ (where $C_1 = C_0$ if $x_1 \notin C_0$).
The circuit $C_1$ contains some $x_2 \in X$, since $(\cup\cC')-X$ is independent in $M$.
Let $C_2$ be the unique circuit of $M|((C \cup C_0) - x_2)$.
The circuits $C,C_1, C_2$ are distinct because $x_1,x_2 \in C$, while $C_1$ contains $x_2$ but not $x_1$ and $C_2$ does not contain $x_2$.
Then $|C_1 \cup C_2| - r_M(C_1 \cup C_2) \ge 2$, and since $|C \cup C_0| - r_M(C \cup C_0) = 2$,
this implies that $C_1 \cup C_2 = C \cup C_0$. In particular, $|C_1 \cup C_2| - r_M(C_1 \cup C_2) = 2$ and $C \subseteq C_1 \cup C_2$.
Then by $(*')$ we have $C \in \cl_N(\{C_1, C_2\})$.

We will finish the proof by using the minimality of $\cC'$ to show that $C_1, C_2 \in \cl_N(\cC')$.
For each $i \in \{1,2\}$, let $C'_i$ be the unique circuit in $\cC'$ that contains $x_i$.
No element in $C_i' - \cup (\cC'-\{C_i'\})$ is in a circuit of $M|((\cup \cC') - x_i)$, because $\cC' - \{C_i'\}$ is a perfect collection of circuits with $|\cup (\cC' - \{C_i'\})| - r_M(\cup (\cC' - \{C_i'\})) = |\cC'| - 1$.
Since $C_i$ does not contain $x_i$, this implies that $C_i \subseteq \cup(\cC' - \{C_i'\})$.
Since $|\cC' - \{C_i'\}| < |\cC'|$, the minimality of $\cC'$ implies that $C_i \in \cl_N(\cC' - \{C_i'\})$ and is thus in $\cl_N(\cC')$.
Finally, since $C_1, C_2 \in \cl_N(\cC')$ and $C \in \cl_N(\{C_1, C_2\})$, it follows that $C \in \cl_N(\cC')$.
\end{proof}

One advantage of $(*')$ over $(*)$ is that it is a local condition rather than a global condition, and therefore may be easier to verify for certain choices of $N$.
For example, when $M$ is graphic it suffices to check condition $(*')$ only when $C_1$ and $C_2$ are in a common theta subgraph.

\section{The converse} \label{sec: the converse}

It was conjectured in \cite[Conj. 1.6]{walsh2022new} that the converse of Theorem \ref{thm: new lift construction} holds: for every matroid $K$ with a set $X$, there is a matroid $N$ on the circuits of $K/X$ so that $(K/X)^N \cong K \del X$.
We show that this is true if $K$ is representable but false in general, even when $|X| = 2$.

\begin{prop}\label{prop: representable lifts}
    Let $K$ be an $\mathbb F$-representable matroid and let $X$ be a subset of its ground set.
    Then there exists an $\mathbb F$-representable matroid $N$ on the circuits of $K/X$ such that $(K/X)^N \cong K \del X$.
\end{prop}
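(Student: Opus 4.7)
The plan is to build $N$ directly from an $\bF$-representation of $K$. Fix a representation $e \mapsto v_e \in \bF^{r(K)}$ of $K$, and set $V := \mathrm{span}\{v_x : x \in X\}$, so that $\dim V = r_K(X)$ and $K/X$ is represented over $\bF$ by the images $\bar v_e \in \bF^{r(K)}/V$. For each circuit $C$ of $K/X$, minimality forces the scalars $(\alpha_e)_{e \in C}$ witnessing $\sum_{e \in C} \alpha_e \bar v_e = 0$ to be nonzero and unique up to a common rescaling, so the lift
$$w_C := \sum_{e \in C} \alpha_e v_e \in V$$
is well-defined up to nonzero scalar (and vanishes precisely when $C$ is already a circuit of $K$). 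I would then take $N$ to be the $\bF$-representable matroid on the circuit set of $K/X$ in which $C$ is represented by $w_C \in V$.

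The crux is a linear-algebra identity that I would isolate as a lemma: for every $Y \subseteq E(K) - X$,
$$\mathrm{span}\{w_C : C \textrm{ is a circuit of } (K/X)|Y\} = \mathrm{span}\{v_e : e \in Y\} \cap V.$$
To prove this, let $K_Y \subseteq \bF^Y$ be the kernel of the composition $\bF^Y \to \bF^{r(K)} \to \bF^{r(K)}/V$, and let $\pi_Y\colon K_Y \to V$ be the restriction of $(\alpha_e) \mapsto \sum_e \alpha_e v_e$. The image of $\pi_Y$ is $\mathrm{span}\{v_e : e \in Y\} \cap V$, and the minimal-support elements of $K_Y$ are, up to scalars, precisely the coefficient vectors of the $w_C$ for circuits $C$ of $(K/X)|Y$. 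Since any linear subspace of $\bF^Y$ is spanned by its minimal-support vectors (take a basis of $(K/X)|Y$ and the associated fundamental dependencies), applying $\pi_Y$ yields the identity.

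Given the crux, the proposition follows quickly. First, to check the hypothesis of Theorem~\ref{thm: new lift construction}, I apply the crux to a modular pair $\{C_1,C_2\}$ with $Y := C_1 \cup C_2$: here $\dim K_Y = 2$, so $\mathrm{span}\{v_e : e \in Y\} \cap V = \mathrm{span}(w_{C_1}, w_{C_2})$, and every circuit $C \subseteq Y$ of $K/X$ lies in $\cl_N(\{C_1,C_2\})$. Second, the standard dimension formula yields $\dim(\mathrm{span}\{v_e : e \in Y\} \cap V) = r_K(Y) - r_{K/X}(Y)$ for every $Y$, so combining with the crux,
$$r_{(K/X)^N}(Y) = r_{K/X}(Y) + r_N(\{C : C \textrm{ is a circuit of } (K/X)|Y\}) = r_K(Y) = r_{K \del X}(Y),$$
and hence $(K/X)^N = K \del X$. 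The main point of the argument is choosing the right $N$: once the $w_C$'s are in hand, both the hypothesis check and the rank identification reduce to the cycle-space lemma, which I expect to be the only technically nontrivial step.
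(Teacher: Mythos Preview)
Your proposal is correct and is essentially the paper's own argument, phrased coordinate-free: your vector $w_C\in V$ is exactly the paper's $A_L x_C$ once one chooses coordinates so that the $X$-columns are standard basis vectors, and both proofs hinge on the same fact that the kernel of a matrix is spanned by its support-minimal elements. The only cosmetic difference is that the paper verifies $M^N=L$ by matching dependent sets in two directions, whereas your ``crux'' lemma lets you compare rank functions directly; this is a mild streamlining but not a genuinely different route.
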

\begin{proof}
    Denote $M:= K / X$ and $L := K \setminus X$ and
    let $E$ denote the ground set of $M$ and $L$.
    Without loss of generality we may assume that $X$ is independent in $K$.
    Therefore there exists a matrix $A$ whose column-matroid is $K$
    such that the columns corresponding to elements of $X$ are distinct standard basis vectors.
    Thus one obtains an $\mathbb{F}$-representation $A_M$ of $M$ by deleting the columns corresponding to $X$,
    as well as the rows corresponding to the nonzero entries of these columns.
    One obtains an $\mathbb{F}$-representation $A_L$ of $L$ by deleting the $X$ columns.
    From this, one obtains an $\mathbb{F}$-representation $A_M$ of $M$ by deleting the rows
    where the columns corresponding to $X$ have their nonzero entries.
    
    For each circuit $C$ of $M$,
    let $x_C$ be an element of the kernel of $A_M$ whose support is $C$
    and let $B$ be a matrix whose column-set is the following.
    \[
        \{A_L x_C : C \ \text{is a circuit of } M\}.
    \]
    Let $N$ be the column matroid of $B$, which we can view as a matroid whose ground set is the circuit set of $M$.
    Let $C_1$ and $C_2$ be circuits of $M$ with $|C_1 \cup C_2| - r_M(C_1 \cup C_2) = 2$, and let $C$ be a circuit of $M$ with $C \subseteq C_1 \cup C_2$.
    Consider the matrix obtained by restricting $A_M$ to the columns indexed by $C_1 \cup C_2$.
    It has a two-dimensional kernel with a basis obtained from $\{x_{C_1},  x_{C_2}\}$ by deleting entries corresponding to elements of $E$ outside $C_1 \cup C_2$.
    Therefore $x_C = \alpha x_{C_1} + \beta x_{C_2}$ for some $\alpha,\beta \in \mathbb{F}$
    and $A_Lx_C = \alpha A_Lx_{C_1} + \beta A_Lx_{C_2}$.
    In particular, $C \in \cl_N(\{C_1, C_2\})$,
    which allows us to apply the construction given in Theorem~\ref{thm: new lift construction}.

    It remains to prove that $L = M^N$.
    Let $Y \subseteq E$, let $T \subseteq Y$ be a basis of $M|Y$, and for each $e \in Y \setminus T$ let $C_e$
    denote the unique circuit of $M$ in $T \cup \{e\}$.
    If $Y$ is dependent in $M^N$, then $T$ is a proper subset of $Y$ and there exists a nontrivial linear dependence of the following form
    \[
        \sum_{e \in Y \setminus T} \lambda_e A_Lx_{C_e} = 0.
    \]
    Then $\sum_{e \in Y \setminus T} \lambda_e x_{C_e}$ lies in the kernel of $A_L$.
    It is nonzero since $x_{C_e}$ is zero at all $f \in Y\setminus(T \cup \{e\})$ and nonzero at $e$.
    So $Y$ is also dependent in $L$.
    
    Now assume $Y$ is dependent in $L$ and let $y$ be such that $A_L y = 0$.
    Then $A_M y = 0$.
    Define $k := |Y| - r_M(Y)$ and note that this is the dimension of the kernel of the matrix $D$ obtained from $A_L$
    by restricting to columns corresponding to $Y$.
    Since the kernel of every matrix is spanned by its support-minimal elements,
    there exists a set of circuits $\{C_1,\dots,C_k\}$ of $M$ so that
    $\{x_{C_1},\dots,x_{C_k}\}$ is a basis of the nullspace of $D$, modulo adding/removing entries corresponding to elements of $E \setminus Y$.
    This gives us scalars $\lambda_1,\dots,\lambda_k$ so that
    \[
        y = \sum_{i = 1}^k \lambda_i x_{C_i}.
    \]
    Multiplying both sides of the above on the left by $A_L$ tells us that $\{C_1,\dots,C_k\}$ is dependent in $N$.
    Since $\{x_{C_1},\dots,x_{C_k}\}$ spans the nullspace of $D$,
    $r_N(\{C_1,\dots,C_k\}) < k$.
    But then $|Y| > r_M(Y) + r_N(\{C_1,\dots,C_k\})$ so $Y$ is also dependent in $M^N$.
\end{proof}

If $M$ and $L$ are representable over a field $\mathbb{F}$ and $L$ is a lift of $M$, this does \emph{not} imply that there is an $\mathbb F$-representable matroid $K$ so that $L = K \del X$ and $M = K/X$ for some set $X \subseteq E(K)$.
Consider the following example.
Given nonnegative integers $r \le n$, the uniform matroid of rank $r$ on $n$ elements is denoted $U_{r,n}$.
Let $M = U_{1,3}$ and let $L = U_{2,3}$.
Then both $M$ and $L$ are representable over $\mathbb{F}_2$.
Let $K$ be a matroid on ground set $E \cup \{e_0\}$.
If $K \del e_0 = L$ and $K / e_0 = M$, then $K = U_{2,4}$ which is not representable over $\mathbb{F}_2$.

We next construct an infinite family of matroids for which the converse of Theorem~\ref{thm: new lift construction} does not hold.
For certain values of $r$ and $t$,
we will define a set of $r$-element subsets $\cC(r,t)$ of $[2t+2]$
and then show that they are the circuit-hyperplanes of a matroid of rank $r$ on ground set $[2t+2]$
which we will denote $K(r,t)$.
We will then show that there is no matroid $N$ on the circuit set of $K(r,t)/\{2t+1,2t+2\}$ such that $K(r,t)\setminus \{2t+1,2t+2\} = (K(r,t)/\{2t+1,2t+2\})^N$.
Proposition~\ref{prop: representable lifts} will then imply that $K(r,t)$ is not representable over any field.

\begin{defn}
    Let $r \ge 4$ and $t \ge 3$ be integers satisfying $r \le 2t-2$.
    For $i = 1,\dots, t$, let $C_i \subseteq [2t]$ be defined as
    \[
        C_i := \{1+2(i-1), 2 + 2(i - 1), \dots, (r-2) + 2(i-1)\}
    \]
    with all numbers taken modulo $2t$.
    Then define:
    \begin{enumerate}
        \item $X := \{2t+1,2t+2\}$,
        \item $\cC'(r,t) := \{C_i \cup X \colon i \in [t]\}$, and
        \item $\cC''(r,t) := \{C_i \cup C_{i+1} \colon i \in [t - 1]\}$.
    \end{enumerate}
    Define $\cC(r,t)$ to be the set of subsets of $[2t+2]$ containing $\cC'(r,t) \cup \cC''(r,t)$
    and all $(r+1)$-element subsets that do not contain an element of $\cC'(r,t)$ or $\cC''(r,t)$.
\end{defn}



We will soon see that $\cC(r,t)$ is the circuit set of a matroid, but before doing this, we look at the case of $r=4$ and $t=3$.
Here we have
\begin{enumerate}
    \item $C_1 = \{1,2\}$, $C_2 = \{3,4\}$, $C_3 = \{5,6\}$, and $X = \{7,8\}$
    \item $\cC'(4,3) = \{1278,3478,5678\}$
    \item $\cC''(4,3) = \{1234,3456\}$.
\end{enumerate}
In particular, $\cC'(4,3) \cup \cC''(4,3)$ is the set of circuit-hyperplanes of the V\'amos matroid $V_8$.
So $K(r,t)$ is a generalization that captures the cyclic nature of the set of circuit-hyperplanes of $V_8$.
Recall that a matroid of rank $r$ is \emph{sparse paving} if every $r$-element subset is either a basis or a circuit-hyperplane.

\begin{prop}
Let $r \ge 4$ and $t \ge 3$ be integers satisfying $r \le 2t-2$.
Then $\cC(r,t)$ is the circuit set of a rank-$r$ sparse paving matroid $K(r,t)$ on ground set $[2t + 2]$.
\end{prop}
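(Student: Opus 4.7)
The plan is to apply the standard characterization of sparse paving matroids: a family $\cH$ of $r$-element subsets of a set $E$ (with $|E| > r$) is the set of circuit-hyperplanes of a rank-$r$ sparse paving matroid on $E$ if and only if $|A \cap B| \le r - 2$ for all distinct $A, B \in \cH$, and in that matroid the circuits are exactly $\cH$ together with the $(r+1)$-subsets that contain no member of $\cH$. With $\cH := \cC'(r,t) \cup \cC''(r,t)$, this is precisely how $\cC(r,t)$ is defined, so the proposition reduces to verifying that every member of $\cH$ has size exactly $r$, and that distinct members of $\cH$ intersect in at most $r - 2$ elements.

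For the size check, $|C_i \cup X| = r$ because $|C_i| = r-2$, $|X| = 2$, and $X \cap [2t] = \varnothing$. For $C_i \cup C_{i+1}$, I would view each $C_i$ as an arc of $r - 2$ consecutive residues on the cycle $\bZ/2t\bZ$ starting at $1 + 2(i-1)$; the inequality $r \le 2t - 2$ ensures no self-wrap, and since $C_{i+1}$ is the shift of $C_i$ by $2$, the two arcs overlap in an arc of length $r - 4$, giving $|C_i \cup C_{i+1}| = r$.

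For the intersection bound, set $D_j := C_j \cup C_{j+1}$, which under the same picture is an arc of length $r$ on $\bZ/2t\bZ$ starting at $1 + 2(j-1)$. I would split into three cases. When $A = C_i \cup X$ and $B = C_j \cup X$ with $i \ne j$, the intersection is $X \cup (C_i \cap C_j)$, so it suffices to show $|C_i \cap C_j| \le r - 4$; this follows from computing the intersection of two arcs of length $r - 2$ on the cycle whose starting points differ by $2|j - i|$ modulo $2t$, accounting for possible overlap on both sides of the cycle using $r \le 2t - 2$. When $A = D_i$ and $B = D_j$ with $1 \le i < j \le t - 1$, the analogous computation for two arcs of length $r$ uses $j - i \le t - 2$ together with $r \le 2t - 2$ to give $|D_i \cap D_j| \le r - 2$. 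When $A = C_i \cup X$ and $B = D_j$, the intersection equals $C_i \cap D_j$ since $X$ is disjoint from $[2t]$, so it has size at most $|C_i| = r - 2$. The main obstacle is the $D_i \cap D_j$ estimate, since the bound is tight and both sides of the cyclic overlap must be tracked; the constraint $r \le 2t - 2$ is precisely what keeps the sum at most $r - 2$. Once these intersection bounds are in hand, the sparse paving characterization delivers the matroid with the required circuit set.
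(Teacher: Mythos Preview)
Your proposal is correct and follows essentially the same approach as the paper: reduce to the sparse paving characterization and verify that any two distinct members of $\cC'(r,t)\cup\cC''(r,t)$ intersect in at most $r-2$ elements via the same three-case split. The only cosmetic difference is in the $\cC''$--$\cC''$ case, where the paper reduces to adjacent indices and uses $C_i\cap C_{i+2}\subseteq C_{i+1}$, while you carry out the general arc-intersection computation directly; both arguments use $r\le 2t-2$ in the same essential way.
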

\begin{proof}
It suffices to show that no two sets in $\cC'(r,t) \cup \cC''(r,t)$ intersect in $r-1$ elements.
We have three cases to consider.

{Case 1:} Let $C \in \cC'(r,t)$ and $C' \in \cC''(r,t)$.
Then $|C \cap C'| \le r - 2$ because $C$ contains $\{2t + 1, 2t + 2\}$ and $C'$ does not.

{Case 2:} Let $C, C' \in \cC'(r,t)$, so $C = C_i \cup X$ and $C' = C_j \cup X$ for some $i,j \in [t]$.
Since $r \le 2t-2$ it follows that $|C_k \cap C_{k+1}| = r - 4$ for all $k$ (indices taken modulo $t$) and $|C_i \cap C_j| \le r - 4$.
Then $|C \cap C'| = |C_i \cap C_{i+1}| + 2 \le r - 2$, as desired.

{Case 3:} Let $C, C' \in \cC''(r,t)$, so $C = C_i \cup C_{i+1}$ and $C' = C_j \cup C_{j+1}$ for some $i, j\in [t]$.
Then $|C \cap C'|$ is maximized when $j= i + 1$, so we may assume that $C' = C_{i+1} \cup C_{i+2}$, taking indices modulo $t$.
Since $r \le 2t-2$ it follows that $C_i \cap C_{i+2}  \subseteq C_{i+1}$.
This implies that $C \cap C' \subseteq C_{i+1}$ and so it follows that $|C \cap C'| \le |C_{i+1}| = r - 2$.
\end{proof}

The following implies Theorem \ref{counterexample}.

\begin{thm} \label{thm: converse if false}
Let $r \ge 4$ and $t \ge 3$ be integers satisfying $r \le 2t-2$.
There is no matroid $N$ on the circuits of $K(r,t)/\{2t + 1, 2t + 2\}$ for which $K(r,t)\del \{2t + 1, 2t + 2\}$
is isomorphic to $(K(r,t)/\{2t + 1, 2t + 2\})^N$.
Moreover, $K(r,t)$ is not representable over any field.
\end{thm}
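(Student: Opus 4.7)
The plan is to assume such a matroid $N$ exists and derive a contradiction by comparing the rank function of $M^N$ (forced to equal $r_L$) on two strategic subsets of $[2t]$. Write $M := K(r,t)/X$ and $L := K(r,t) \setminus X$, where $X = \{2t+1, 2t+2\}$. Since $K(r,t)$ is sparse paving with $X$ independent, $M$ has rank $r-2$ on $[2t]$ with size-$(r-2)$ circuits exactly $C_1, \dots, C_t$, while $L$ has rank $r$ on $[2t]$ with size-$r$ circuits exactly $C_i \cup C_{i+1}$ for $i \in [t-1]$. Since each $C_i$ is independent in $L$ but a circuit of $M$, we have $r_L(C_i) - r_M(C_i) = 1$, which forces $r_N(\{C_i\}) = 1$; that is, each $C_i$ is a non-loop of $N$.

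The first main step is to show $C_1, \dots, C_t$ all lie in a common parallel class of $N$. For each $i \in [t-1]$, set $Y := C_i \cup C_{i+1}$. Since $Y$ is a circuit of $L$ we have $r_L(Y) = r-1$, while $r_M(Y) = r-2$, so the identity defining $M^N$ demands $r_N(\{C : C \text{ is a circuit of } M|Y\}) = 1$. As $\{C_i, C_{i+1}\}$ lies in this rank-$1$ set and neither is a loop, $C_i$ and $C_{i+1}$ must be parallel in $N$. Transitivity then yields $C_1 \parallel C_2 \parallel \cdots \parallel C_t$ in $N$; in particular $C_1 \parallel C_t$.

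The contradiction then comes from $Y' := C_1 \cup C_t$, the ``wraparound'' pair deliberately omitted from $\cC''(r,t)$. Since $r \le 2t-2$, a direct check shows $|Y'| = r$ and $Y' \notin \cC'(r,t) \cup \cC''(r,t)$, so $Y'$ is a basis of $K(r,t)$, giving $r_L(Y') = r$; meanwhile $r_M(Y') = r-2$. Hence the defining identity forces $r_N(\{C : C \text{ is a circuit of } M|Y'\}) = 2$. But $|Y'| - r_M(Y') = 2$ makes $\{C_1, C_t\}$ a modular pair in $M$, so condition $(*')$ of Proposition~\ref{simpler perfect} forces every circuit of $M$ contained in $Y'$ to lie in $\cl_N(\{C_1, C_t\})$; since $C_1 \parallel C_t$, this closure has rank $1$, contradicting the required rank $2$. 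The ``moreover'' clause then follows from Proposition~\ref{prop: representable lifts}: if $K(r,t)$ were representable over some field, that proposition would produce an $N$ of precisely the kind just ruled out.

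The main obstacle is identifying the set $Y' = C_1 \cup C_t$: among all pairs $\{C_i, C_j\}$ whose union has exactly $r$ elements (and thus yields a modular pair in $M$), this is the unique one that fails to be a circuit of $L$, an asymmetry built into the definition of $\cC''(r,t)$ via the index range $i \in [t-1]$ rather than cyclically, and it is precisely this asymmetry that obstructs $L$ from being realized as $M^N$.
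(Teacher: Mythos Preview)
Your proof is correct and follows essentially the same approach as the paper: establish that each $C_i$ is a non-loop of $N$, use the rank identity on $C_i\cup C_{i+1}$ to force $C_i\parallel C_{i+1}$ for $i\in[t-1]$, then derive a contradiction from the wraparound pair $C_1\cup C_t$, which is a basis of $L$ and hence would require $N$-rank $2$ on its circuits. Your write-up is in fact slightly more careful than the paper's at the final step, making explicit that condition $(*')$ confines all circuits of $M|(C_1\cup C_t)$ to $\cl_N(\{C_1,C_t\})$, whereas the paper simply asserts that $(b)$ and $(d)$ force $\{C_1,C_t\}$ to be independent in $N$.
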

\begin{proof}
Denote $K := K(r,t)$ and $X := \{2t + 1, 2t + 2\}$.
Let $M := K/X$ and $L := M \del X$, so $L$ is a rank-$2$ lift of $M$.
For each $i \in [t]$, the set $C_i$ is a circuit of $M$ and is independent in $L$.
So for each $i \in [t]$ the set $C_i$ is a non-loop of $N$.
We will argue that the following statements hold for $M$ and $L$:
\begin{enumerate}[$(a)$]
\item $(C_i , C_{i+1})$ is a modular pair of circuits of $M$ for each $i \in [t-1]$,

\item $(C_1, C_{t})$ is a modular pair of circuits of $M$,

\item $r_L(C_i \cup C_{i+1}) - r_M(C_i \cup C_{i+1}) = 1$ for each $i \in [t-1]$,

\item $r_L(C_1 \cup C_{t}) - r_M(C_1 \cup C_{t}) = 2$.
\end{enumerate}

For $(a)$, note that $|C_i \cup C_{i+1}| = r$ and $r_M(C_i \cup C_{i+1}) = r - 2$, because $r_K(C_i \cup C_{i+1} \cup X) = r$.
The same argument also proves $(b)$.
For $(c)$, note that $r_K(C_i \cup C_{i+1}) = r - 1$ because $C_i \cup C_{i+1}$ is a circuit of $K$ of cardinality $r$.
However $r_L(C_1 \cup C_{t}) - r_M(C_1 \cup C_{t}) = 2$ because $C_1 \cup C_{t}$ is independent in $K$, proving $(d)$.

Now suppose there is a matroid $N$ on the circuits of $M$ so that $M^N \cong L$.
Then $(a)$ and $(c)$ together imply that $C_i$ and $C_{i+1}$ are parallel in $N$ for all $i \in [t]$, which implies that $C_1$ and $C_{t}$ are parallel in $N$.
But $(b)$ and $(d)$ together imply that $C_1$ and $C_{t}$ are independent in $N$, a contradiction.
It now follows from Proposition \ref{prop: representable lifts} that $K$ is not representable.
\end{proof}

As illustrated by Theorem \ref{thm: converse if false}, the fact that the converse of Theorem \ref{thm: new lift construction} is false in general but true for representable matroids gives a new way to certify non-representability.
We hope that this method is in fact `new' and that $K(r,t)$ cannot be certified as non-representable using existing means.
However, there are many ways to certify non-represent\-ability, and we make no attempt to test $K(r,t)$ against them all.
We merely show that $K(r,t)$ does not violate the most well-known certificate for non-representability: Ingleton's inequality.
Ingleton \cite{ingleton1971} proved that if a matroid has sets $A,B,C,D$ so that
\begin{align*}
&r(A \cup B) + r(A \cup C) + r(A \cup D) + r(B \cup C) + r(B \cup D)\\
& \ge r(A) + r(B) + r(A \cup B \cup C) + r(A \cup B \cup D) + r(C \cup D)
\end{align*}
then it is not representable.
We show that this cannot be used to certify non-represent\-ability of $K(r,t)$ when $r \ge 5$ and $r \le 2t-3$.
Following \cite{PeterJorn}, we say that a matroid is \emph{Ingleton} if any choice of four subsets satisfies the above inequality.

\begin{prop}
Let $r \ge 5$ and $t \ge 3$ be integers satisfying $r \le 2t-3$.
Then $K(r,t)$ is Ingleton.
\end{prop}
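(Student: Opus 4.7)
The plan is to leverage the sparse paving structure of $K(r,t)$ and compare to the uniform matroid $U_{r,2t+2}$. Its rank function satisfies $r_K(S) = g(S) - \epsilon(S)$, where $g(S) = \min(|S|, r)$ is the rank function of $U_{r,2t+2}$ and $\epsilon(S) \in \{0,1\}$ equals $1$ precisely when $|S| = r$ and $S \in \cC'(r,t) \cup \cC''(r,t)$. Since $U_{r,2t+2}$ is representable (over any sufficiently large field) and so satisfies Ingleton's inequality, writing $I_g(A,B,C,D) \ge 0$ for that quantity, substitution yields that Ingleton's expression for $K(r,t)$ equals $I_g + \delta$, where
\[
    \delta = \epsilon(A) + \epsilon(B) + \epsilon(A \cup B \cup C) + \epsilon(A \cup B \cup D) + \epsilon(C \cup D) - \bigl[\epsilon(A \cup B) + \epsilon(A \cup C) + \epsilon(A \cup D) + \epsilon(B \cup C) + \epsilon(B \cup D)\bigr].
\]
It suffices to show $I_g + \delta \ge 0$.

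I would proceed by case analysis on the number $k$ of LHS $\epsilon$-terms equal to $1$, exploiting these structural facts about $\cC(r,t)$: (a) each nonbasis in $\cC'(r,t)$ contains $X = \{2t+1, 2t+2\}$, so two Type~1 nonbases $C_i \cup X$ and $C_j \cup X$ intersect in $X \cup (C_i \cap C_j)$; (b) each nonbasis in $\cC''(r,t)$ has the form $C_i \cup C_{i+1}$ for some $i \in [t-1]$, so the adjacency graph on $\{C_1, \ldots, C_t\}$ given by $\cC''(r,t)$-membership is the \emph{path} $C_1 - C_2 - \cdots - C_t$ (the wraparound pair $\{C_t, C_1\}$ is excluded); and (c) the hypothesis $r \le 2t-3$ implies $|C_i \cup C_j| > r$ for all pairs $\{i,j\}$ with $i,j$ non-consecutive modulo $t$, with the single exception of the wraparound pair $\{1,t\}$ (which has size $r$ but is not a nonbasis). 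So no non-path-adjacent pair $C_i,C_j$ yields a nonbasis.

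The critical case is $k=5$, corresponding to a Vamos-like configuration. Here I would argue that if $A \cup B, A \cup C, A \cup D$ are all Type~1, then either $X \subseteq A$ or else $X$ is spread across $B, C, D$ (forcing some of $B,C,D$ to intersect). In the disjoint subcase $A \supseteq X$, the sets $B, C, D$ must each have the form $C_i \setminus (A \setminus X)$ for varying $i$, and requiring that $B \cup C$ and $B \cup D$ both be Type~2 nonbases would force three pairwise-adjacent $C_i$'s, which is impossible in the path graph of (b). Mixed-type and non-disjoint subcases are handled analogously using (a)--(c) to derive a contradiction or to force $I_g \ge |\delta|$. For $k \le 4$, a direct calculation of $I_g$ from the sizes of the sets and their pairwise intersections, combined with $r \ge 5$, provides enough slack to establish $I_g \ge |\delta|$.

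The main obstacle is managing the full case analysis, especially when $A, B, C, D$ have nontrivial overlaps or when LHS nonbases are of mixed type. Careful bookkeeping will be needed to track how the structural constraints from (a)--(c) interact with the sizes of the sets, and to invoke $r \ge 5$ and $r \le 2t - 3$ at the right points in each subcase.
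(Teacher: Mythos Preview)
Your approach is genuinely different from the paper's, and while the overall strategy is sound, the proposal as written leaves real gaps.

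The paper does not verify Ingleton's inequality directly. Instead it invokes a characterization due to Nelson and van der Pol \cite[Lemma 3.1]{PeterJorn}: a rank-$r$ sparse paving matroid is Ingleton if and only if it has no pairwise disjoint sets $I,P_1,P_2,P_3,P_4$ with $|I|=r-4$ and $|P_i|=2$ such that $I\cup P_i\cup P_j$ is a circuit-hyperplane for every pair except $\{3,4\}$, while $I\cup P_3\cup P_4$ is a basis. This reduces the entire problem to ruling out one combinatorial pattern. The paper then observes that such an $I$ would lie in at least five circuit-hyperplanes of $K(r,t)$, forces $I=C_i\cap C_{i+1}$, and obtains a contradiction from $|C_{i-1}\cup C_i\cup C_{i+1}\cup C_{i+2}\cup X|>r+4$ when $r\le 2t-3$. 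The whole proof is a few lines.

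Your plan, by contrast, is essentially to reprove the Nelson--van der Pol reduction in this special case. That is doable, but two places in your sketch are underspecified. First, the assertion that for $k\le 4$ ``$r\ge 5$ provides enough slack'' is not correct as stated: in the near-V\'amos configuration $A=I\cup P_1$, $B=I\cup P_2$, $C=I\cup P_3$, $D=I\cup P_4$ with disjoint $I,P_i$ of sizes $r-4,2$, one computes $I_g=4$ exactly, so if four of the five pairwise unions are circuit-hyperplanes and the positive $\epsilon$-terms vanish, then $I_g+\delta=0$ with no slack at all. You need a tight argument here, not a slack argument. Second, in your $k=5$ case you jump to assuming $A,B,C,D$ are of a specific disjoint form with $A\supseteq X$; arbitrary $A,B,C,D$ with all five pairwise unions equal to circuit-hyperplanes need not look like this a priori, and reducing to that form is precisely the content of the Nelson--van der Pol lemma. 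Your phrase ``handled analogously'' hides the bulk of the work.

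If you want to pursue your route, the cleanest fix is simply to cite \cite[Lemma~3.1]{PeterJorn} and then run your path-versus-cycle argument on the $C_i$'s (your observations (a)--(c) are correct and are essentially what the paper uses). Without that citation, you are committed to a substantial case analysis that your proposal does not yet supply.
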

\begin{proof}
Nelson and van der Pol \cite[Lemma 3.1]{PeterJorn} showed that a rank-$r$ sparse paving matroid is Ingleton if and only if there are no pairwise disjoint subsets $I, P_1, P_2, P_3, P_4$ so that $|I| = r - 4$ and $|P_i| = 2$ for all $i \in \{1,2,3,4\}$, while $I \cup P_i \cup P_j$ is a circuit of for all $\{i,j\} \ne \{3,4\}$ and $I \cup P_3 \cup P_4$ is a basis.
Suppose that such sets exist for $K(r,t)$.
Then $I$ is an $(r-4)$-element set contained in at least five circuit-hyperplanes of $K(r,t)$, and
$Y = I \cup P_1 \cup P_2 \cup P_3 \cup P_4$ is an $(r + 4)$-element set that contains at least five circuit-hyperplanes of $K(r,t)$.
Now let $X = \{2t+1, 2t+2\}$.

If $I \ne C_i \cap C_{i+1}$ for some $i$ (indices taken modulo $t$), then $I$ is in at most one circuit-hyperplanes of the form $C_j \cup X$, and at most three of the form $C_j  \cup C_{j+1}$, a contradiction.
So $I = C_i \cap C_{i+1}$ for some $i$.
Then $I$ is contained in exactly five circuit-hyperplanes, namely $C_i \cup X$, $C_{i+1} \cup X$, $C_{i-1} \cup C_{i}$, $C_i \cup C_{i+1}$, and $C_{i+1} \cup C_{i+2}$, which means that each of these sets is contained in $Y$.
However, $r \le 2t-3$ implies that $|C_{i-1} \cup C_i \cup C_{i+1} \cup C_{i+2} \cup X| > r + 4$, a contradiction.
\end{proof}

It follows from Theorem \ref{thm: converse if false} and results of Nelson and van der Pol \cite{PeterJorn} that $K(r,t)$ also cannot be certified as non-representable via a small non-representable minor.
Following \cite{PeterJorn}, a rank-$4$ sparse paving matroid $M$ is \emph{V\'amos-like} if it has a partition $(P_1,P_2,P_3,P_4)$ such that exactly five of the six pairs $P_i \cup P_j$ form circuits of $M$.
There are $39$ V\'amos-like matroids, one of which is the V\'amos matroid itself, and none are representable.
They prove that a sparse paving matroid is Ingleton if and only if it has no V\'amos-like minor.
So, Theorem \ref{thm: converse if false} implies that $K(r,t)$ has no V\'amos-like minor when $r \ge 5$ and $r \le 2t-3$.

While $K(r,t)$ is non-representable, we conjecture that it is very close to being representable, in the following sense.


\begin{conj} \label{conj: relax}
For all integers $r$ and $t$ with $r \ge 4$, $t \ge 3$ and $2t +2 \ge  r + 4$, any matroid obtained from $K(r,t)$ by relaxing a circuit-hyperplane into a basis is representable.
\end{conj}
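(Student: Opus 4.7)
The plan is to exhibit, for each relaxation $K'$ of $K(r,t)$, an explicit $\mathbb{F}$-representation over a sufficiently large field. Writing $X = \{2t+1, 2t+2\}$ and $H$ for the relaxed circuit-hyperplane, I would decompose $K'$ via its contraction $M' := K'/X$ and deletion $L' := K' \del X$, and then reassemble an $\mathbb{F}$-representation using the lift construction of Theorem~\ref{thm: new lift construction}. Two cases arise: Case~A, where $H = C_i \cup X \in \cC'(r,t)$, so that $L' = L$ while $M'$ loses $C_i$ as a circuit; and Case~B, where $H = C_j \cup C_{j+1} \in \cC''(r,t)$, so that $M' = M$ while $L'$ has $C_j \cup C_{j+1}$ promoted from a circuit-hyperplane to a basis.

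First I would represent $M'$ over $\mathbb{F}$. This is a sparse paving matroid of rank $r-2$ on $[2t]$ whose non-basis $(r-2)$-subsets are the cyclic intervals $C_k$ (with $C_i$ excluded in Case~A); placing the column vectors $\bar v_k$ at generic points of $\mathbb{F}^{r-2}$ that satisfy exactly the cyclic-interval dependencies should yield such a representation. Following the converse direction of Proposition~\ref{prop: representable lifts}, it then suffices to produce a representable matroid $N'$ on the circuits of $M'$ for which $(M')^{N'} \cong L'$; a representation of $K'$ is then assembled by the matrix recipe from the proof of Proposition~\ref{prop: representable lifts}, with standard basis vectors in the two columns indexing $X$.

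The heart of the matter is constructing $N'$. The obstruction to representing the unrelaxed $K(r,t)$, exposed in the proof of Theorem~\ref{thm: converse if false}, is the cyclic chain of parallel-class constraints $C_1 \parallel C_2 \parallel \cdots \parallel C_t$ forced by the modular pairs $\{C_k, C_{k+1}\}$, together with the required independence of $\{C_1, C_t\}$. Relaxing $H$ breaks this cycle at exactly one location: removing $C_i$ from the circuits of $M'$ in Case~A, or dropping the parallelism $C_j \parallel C_{j+1}$ in Case~B. I would therefore take $N'$ to be a rank-$2$ matroid whose two parallel classes partition $\{C_1,\dots,C_t\}$ (minus $C_i$ in Case~A) into the halves on either side of the break, and then extend $N'$ to the size-$(r-1)$ circuits of $M'$ by assigning each such $D$ to one of the two classes (or to the loops of $N'$) in a way that maintains condition~$(*')$ of Proposition~\ref{simpler perfect}.

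The principal obstacle lies in verifying $(*')$ for every modular pair of $M'$-circuits, not only the consecutive pairs $\{C_k,C_{k+1}\}$. The hypothesis $2t + 2 \ge r + 4$ controls the overlap pattern of the cyclic intervals $C_k$; I expect this bound is precisely what is needed to ensure that every size-$(r-1)$ circuit $D$ of $M'$ sits inside a union $C_k \cup C_l$ with $C_k, C_l$ on the same side of the break, so that $D$ can be placed in the same parallel class as $C_k$ and $C_l$ without conflict. Carrying out the combinatorial bookkeeping for these placements, and checking that no modular pair forces a contradiction in the rank-$2$ closure structure of $N'$, is the bulk of the remaining work; once done, the desired representation of $K'$ follows immediately from the matrix recipe underlying Proposition~\ref{prop: representable lifts}.
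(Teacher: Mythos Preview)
The statement you are attempting to prove is stated in the paper as a \emph{conjecture}; the authors offer no proof and leave it open. There is therefore no argument in the paper against which to compare your attempt.

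Independently of that, your sketch has a structural gap. You plan to represent $M' = K'/X$, produce a representable rank-$2$ matroid $N'$ on the circuits of $M'$ with $(M')^{N'} \cong L'$, and then ``assemble'' a representation of $K'$ using the matrix recipe from Proposition~\ref{prop: representable lifts}. But that proposition goes from a representation of $K$ to one of $N$, not the other way. Reversing it requires more than an abstractly representable $N'$ satisfying~$(*')$: having fixed a representation $A_{M'}$ of $M'$ with circuit kernel-vectors $x_C$, you need a linear map $\psi\colon \ker(A_{M'}) \to \mathbb{F}^2$ so that the vectors $\psi(x_C)$ realise $N'$. Every relation $x_C = \alpha x_{C_1} + \beta x_{C_2}$ forced by $A_{M'}$ must then be respected by the $\psi(x_C)$ with those exact scalars, not merely up to span; the parallel-class picture you describe for $N'$ does not address this, and whether a compatible $\psi$ exists depends on the particular representation of $M'$ you chose. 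A correct argument along these lines would need to construct $A_{M'}$ and the two extra rows together and verify directly that the resulting matrix, augmented by the two standard-basis $X$-columns, has column matroid $K'$ --- at which point the detour through $N'$ and Theorem~\ref{thm: new lift construction} is no longer doing real work. (You also assert without proof that $M'$ itself is representable; this is plausible for a sparse paving matroid with so few circuit-hyperplanes over a large field, but it still needs justification.)
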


When $r \le 2t-3$, $K(r,t)$ has no element in every circuit-hyperplane, so Conjecture~\ref{conj: relax} would imply that $K(r,t)$ is an excluded minor for the class of representable matroids.
We prove one more interesting property of $K(r,t)$.

\begin{prop} \label{antichain}
Let $r \ge 4$ and $t \ge 3$ be integers satisfying $r \le 2t-3$
and let $M$ be a proper minor of $K(r,t)$.
Then $M$ is not isomorphic to $K(r',t')$ for any integers $r'\ge 4$
and $t' \ge 3$ satisfying $r' \le 2t'-3$.
\end{prop}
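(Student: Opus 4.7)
Suppose for contradiction that a proper minor $M$ of $K(r,t)$ is isomorphic to $K(r',t')$ with $r'\ge 4$, $t'\ge 3$, and $r'\le 2t'-3$. Write $M=K(r,t)/I\setminus D$, where $I$ is independent in $K(r,t)$ and disjoint from $D$. Since minors of sparse paving matroids are sparse paving, the ranks and ground-set sizes force $|I|=r-r'$ and $|I|+|D|=2(t-t')\ge 2$. The circuit-hyperplanes of $M$ correspond bijectively to the circuit-hyperplanes $T$ of $K(r,t)$ with $I\subseteq T$ and $T\cap D=\varnothing$, so there are exactly $2t'-1$ such $T$.

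The plan is to study the \emph{intersection graph} $G(N)$ of a sparse paving matroid $N$ of rank $s$, whose vertices are the circuit-hyperplanes of $N$ and whose edges join pairs of circuit-hyperplanes intersecting in at least $s-2$ elements, and to exploit that $G(M)$ is precisely the induced subgraph of $G(K(r,t))$ on the surviving circuit-hyperplanes (contraction by $I$ shrinks every pairwise intersection uniformly by $|I|=r-r'$, preserving the edge relation). Using the cyclic-interval identity $|C_i\cap C_j|=\max(0,r-2-2\,d(i,j))$ with $d(\cdot,\cdot)$ the cyclic distance on $[s]$, one checks that for $r\le 2s-3$ the edges of $G(K(r,s))$ partition into an $s$-cycle on $\cC'(r,s)$, a path on $\cC''(r,s)$, and $2(s-1)$ cross-edges joining each $C_i\cup X$ to $C_{i-1}\cup C_i$ (for $i\ge 2$) and to $C_i\cup C_{i+1}$ (for $i\le s-1$). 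In particular $G(K(r,s))$ has exactly four vertices of degree $3$---namely $C_1\cup X$, $C_s\cup X$, $C_1\cup C_2$, and $C_{s-1}\cup C_s$---all others of degree $4$, and exactly $2s-3$ triangles.

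Using this structural description, I would identify the four degree-$3$ vertices of $G(M)$, which under the isomorphism $M\cong K(r',t')$ must correspond to the four distinguished vertices of $G(K(r',t'))$, and trace them back to specific surviving circuit-hyperplanes of $K(r,t)$. Combining the resulting incidence constraints with the requirement that $G(M)$ has exactly $2t'-3$ triangles and contains a $t'$-cycle would drive a case analysis on how $I$ and $D$ are distributed among the ``endpoint'' positions $i\in\{1,t\}$ versus the interior $C_i$'s, ultimately forcing $|I|+|D|=0$ and contradicting the properness of the minor. The main obstacle I anticipate is that cross-edges in $G(K(r,t))$ produce not only the expected triangles but also longer induced cycles (for instance, an induced $(t+1)$-cycle obtained by replacing a single edge of the $\cC'(r,t)$-cycle with a walk through $\cC''(r,t)$), so care is needed to rule out ``reshuffled'' realizations of $G(K(r',t'))$ in which $\cC'(r',t')$ is matched to a cycle that blends vertices of $\cC'(r,t)$ and $\cC''(r,t)$; the hypothesis $r\le 2t-3$, which guarantees $|C_i\cap C_{i+2}|=0$, is the technical input that limits the possible cross-edge structure enough for the case analysis to succeed.
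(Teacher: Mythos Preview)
Your setup is sound---the bijection between circuit-hyperplanes of $M$ and those of $K(r,t)$ containing $I$ and avoiding $D$ is correct, and the intersection graph $G(K(r,t))$ does have the structure you describe (a $t$-cycle on $\cC'$, a path on $\cC''$, and $2(t-1)$ cross-edges, with exactly four degree-$3$ vertices). However, the proof is incomplete: the promised case analysis is never carried out, and you yourself flag an obstacle (alternative induced cycles mixing $\cC'$- and $\cC''$-vertices) without resolving it. There is also an error in your stated technical input: the hypothesis $r\le 2t-3$ does \emph{not} force $|C_i\cap C_{i+2}|=0$; for instance with $r=7$, $t=5$ one has $C_1=\{1,\dots,5\}$ and $C_3=\{5,\dots,9\}$, so $C_1\cap C_3=\{5\}$. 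Your structural description happens to survive this, but the reason you give for why the case analysis should close does not.

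The paper bypasses all of this with a short counting argument that uses only the contracted set $A$ (your $I$) and ignores $D$ entirely. Since each circuit-hyperplane of $K(r',t')$ lifts to a circuit-hyperplane of $K(r,t)$ containing $A$, the set $A$ lies in at least $2t'-1$ circuit-hyperplanes of $K(r,t)$. One then notes that the intersection of any $h$ of the cyclic intervals $C_i$ has size at most $r-2h$; hence $A$, of size $r-r'$, can lie in at most $r'/2$ of the $C_i$, and therefore in at most $r'/2$ sets $C_i\cup X$ and at most $r'/2+1$ sets $C_i\cup C_{i+1}$, for a total of at most $r'+1$ circuit-hyperplanes. Since $r'\le 2t'-3$ gives $r'+1<2t'-1$, this is already a contradiction---no intersection graph, no degree-$3$ vertices, no case analysis.
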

\begin{proof}
Suppose $K(r',t')$ is a minor of $K(r,t)$ and $(r',t') \ne (r,t)$.
Then $r' < r$.
Let $A \subseteq E(K(r,t))$ be independent so that $K(r,t)/A$ has a spanning $K(r',t')$-restriction, so $|A| = r - r'$.
If $C$ is a circuit of $K(r',t')$, then $C \cup A$ contains a circuit of $K(r,t)$.
If $C$ is a circuit-hyperplane of $K(r',t')$ then $|C \cup A| = r' + (r - r') = r$, which implies that $C \cup A$ is a circuit-hyperplane of $K(r,t)$.
Since $K(r',t')$ has $2t' - 1$ circuit-hyperplanes, this implies that $A$ is contained in at least $2t' - 1$ circuit-hyperplanes of $K(r,t)$.

It is straightforward to show that the intersection of any $h$ of the sets $C_i$ has size at most $r - 2h$.
So if $A$ is contained in $h$ of the sets $C_i$ and $h > r'/2$, then $|A| \le r - 2h < r - r' = |A|$, a contradiction.
So $A$ is contained in $C_i$ for at most $r'/2$ different choices of $i$.
Then $A$ is contained in at most $r'/2$ circuit-hyperplanes of $K(r,t)$ of the form $C_i \cup X$ and at most $r'/2 + 1$ circuit-hyperplanes of the form $C_i \cup C_{i+1}$.
But then $A$ is contained in at most $r'/2 + (r'/2 + 1) = r' + 1$ circuit-hyperplanes of $K(r,t)$.
Since $r' + 1 < 2t' - 1$ when $r'\le 2t' - 3$, this is a contradiction.
\end{proof}

So $\{K(r,t)\colon r\ge 4, t\ge 3, r \le 2t-3\}$ is an infinite antichain of non-representable matroids that all satisfy Ingleton's inequality, and if Conjecture \ref{conj: relax} is true then each is also an excluded minor for representability.
We comment that while $K(r,t)$ is constructed using rank-$2$ lifts, related constructions using rank-$t$ lifts with $t > 2$ are likely possible as well.
We conclude this section with the following question.

\begin{ques}
Which matroids $K$ have the property that for every set $X \subseteq E(K)$ there is a matroid $N$ on the circuits of $K/X$ such that $(K/X)^N \cong K \del X$?
\end{ques}

For example, if every algebraic matroid has this property then $K(r,t)$ would be non-algebraic for all $r \ge 4$ and $t \ge 3$.
This may be a promising direction since the V\'amos matroid is non-algebraic \cite{IngletonMain1975} and isomorphic to $K(4,3)$.

\section{Gain graphs} \label{sec: gain graphs}
Recall that $(K_n^{\Gamma}, \phi_n^{\Gamma})$ is the gain graph over a finite group $\Gamma$ where $K_n^{\Gamma}$ has vertex set $[n]$ and edge set ${[n]\choose 2}\times \Gamma$, and the gain function $\phi_n^{\Gamma}$ orients edge $(\{i,j\}, \alpha)$ from $i$ to $j$ when $i <j$ and assigns the label $\alpha$.
We write $\alpha_{ij}$ for the edge $(\{i,j\}, \alpha)$, for convenience.
For each $\alpha \in \Gamma$ we write $E_\alpha$ for $\{(\{i,j\},\alpha) \colon 1 \le i < j \le n\}$; these are the edges \emph{labeled} by $\alpha$.
For a set $A \subseteq \Gamma$ we write $E_A$ for $\cup_{\alpha \in A} E_\alpha$.
A cycle of $(K_n^{\Gamma}, \phi_n^{\Gamma})$ is \emph{balanced} if an oriented product of its edge labels is equal to the identity element of $\Gamma$.
For the remainder of this section we shall refer to balanced and unbalanced cycles of $K_n^{\Gamma}$, with the gain function $\phi_n^{\Gamma}$ implicit.

We can use $\Gamma$ to define special automorphisms of the graph $K_n^{\Gamma}$, as follows.
Given an integer $k \in [n]$ and an element $\beta \in \Gamma$, define an automorphism $f_\beta \colon E(K_n^{\Gamma}) \to E(K_n^{\Gamma})$ by 
\[
    f_\beta(\alpha_{ij}) = \begin{cases}
        (\beta^{-1} \cdot \alpha)_{ij} \quad \textnormal{if   } i = k \\
        (\alpha\cdot \beta)_{ij} \quad \textnormal{if   } j = k \\
        \alpha_{ij} \quad \textnormal{otherwise}.
    \end{cases}
\]
For each edge $e$ of $K_n^{\Gamma}$ we say that $f_\beta(e)$ is obtained from $e$ by \emph{switching} at vertex $k$ with value $\beta$.
If a set $X$ of edges of $K_n^{\Gamma}$ can be obtained from a set $Y$ via a sequence of switching operations we say that $X$ and $Y$ are \emph{switching equivalent}.
It is straightforward to check that switching maps balanced cycles to balanced cycles and unbalanced cycles to unbalanced cycles.
We comment that switching is typically an operation on gain functions, and our application of this operation to define a graph automorphism is nonstandard.

For each nontrivial finite group $\Gamma$ and integer $n\ge 3$, we define $\cM_{n, \Gamma}$ to be the class of lifts of the cycle matroid $M(K_n^{\Gamma})$ for which a cycle of $K_n^{\Gamma}$ is a circuit of $M$ if and only if it is balanced.
Each matroid in $\cM_{n, \Gamma}$ is simple, since each $2$-element cycle of $M(K_n^{\Gamma})$ is unbalanced.

We now generalize Theorem \ref{abelian groups converse} in the case that $n \ge 4$.

\begin{thm}\label{thm: 4 or more}
Let $n\ge 4$ be an integer, let $\Gamma$ be a finite group, and let $M \in \cM_{n,\Gamma}$.
If $r(M) - r(M(K_n^{\Gamma})) > 1$, then there is a prime $p$ and an integer $j\ge 2$ so that $\Gamma\cong \bZ_p^j$.
\end{thm}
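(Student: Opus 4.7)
My approach is to derive strong group-theoretic constraints on $\Gamma$ from the existence of the non-elementary lift $M$, ultimately forcing $\Gamma$ to be an elementary abelian $p$-group. First I would reduce to the case $r(M) - r(M(K_n^\Gamma)) = 2$: starting from a rank-$k$ lift $M$ with $k \ge 2$, choose an extension matroid $K$ realizing the lift, take an elementary quotient by $k-2$ independent elements of the lift coordinate, and verify that the resulting rank-$2$ lift remains in $\cM_{n,\Gamma}$ since balanced/unbalanced status of cycles of $K_n^\Gamma$ is unchanged.

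The main step is to extract a nontrivial group partition of $\Gamma$ from $M$. Fix two vertices $1, 2$ and consider $L = M|E_{12}$, a simple matroid on $\Gamma$ of rank at most three (every $2$-cycle in $E_{12}$ has distinct labels and is hence unbalanced, so independent in $M$). I would define an equivalence relation $\alpha \sim \beta$ iff $\{\epsilon_{12}, \alpha_{12}, \beta_{12}\}$ has rank $2$ in $M$ and show that the equivalence classes together with $\epsilon$ are subgroups forming a partition of $\Gamma$. To establish the closure $\alpha \sim \beta \Rightarrow \alpha\beta^{-1} \sim \epsilon$, I would apply submodularity of $r_M$ to an assembly consisting of an unbalanced $2$-cycle in $E_{12}$ together with two balanced triangles on $\{1,2,k\}$ of the form $\{\alpha_{12}, \gamma_{2k}, (\alpha\gamma)_{1k}\}$ for a third vertex $k$, exploiting that the rank-$2$ lift hypothesis sharpens the relevant submodular inequality into an equality that yields the group closure.

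Having obtained a nontrivial group partition of $\Gamma$, I would leverage the assumption $n \ge 4$ to force $\Gamma$ into the $\bZ_p^j$ form. Using the fourth vertex I would construct balanced $4$-cycles of the form $\{\alpha_{12}, \beta_{23}, \gamma_{34}, (\alpha\beta\gamma)_{14}\}$ and compare them with their ``transposed'' analogs $\{\beta_{12}, \alpha_{23}, \gamma_{34}, (\beta\alpha\gamma)_{14}\}$, together with the balanced triangles induced on each of the four vertex triples. The resulting rank identities in $M$ force $\alpha\beta = \beta\alpha$ for all non-identity $\alpha, \beta \in \Gamma$, giving abelianness, and force every nontrivial element to have the same prime order $p$ by tracking blocks of the partition under the group operation. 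Combining these two conclusions gives $\Gamma \cong \bZ_p^j$ for some $j \ge 2$.

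The principal obstacle I foresee is the second paragraph: verifying that collinearity with the identity in $M|E_{12}$ is compatible with the group multiplication. This requires a careful rank accounting on unions of unbalanced $2$-cycles and balanced triangles, where the rank-$2$ lift hypothesis is essential for turning the submodular inequality into an equality and thereby yielding the subgroup closure. A secondary technical challenge is coordinating the partitions produced by the different pairs $(i, j)$ so that they coincide as a single global partition of $\Gamma$; I would expect this to follow by combining the local analysis above with switching-invariance of the class $\cM_{n,\Gamma}$, which lets me reduce each pairwise analysis to a canonical configuration.
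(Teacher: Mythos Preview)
Your reduction in the first paragraph has a genuine gap. If $K$ witnesses that $M$ is a rank-$k$ lift of $M(K_n^{\Gamma})$ and you form an intermediate lift $M' = (K/F')\setminus(F\setminus F')$ by contracting $k-2$ elements of the lift set, then the sandwich $r_{M(K_n^{\Gamma})}(X)\le r_{M'}(X)\le r_M(X)$ does force every \emph{balanced} cycle to remain a circuit of $M'$, but it does \emph{not} force unbalanced cycles to stay independent: for an unbalanced cycle $C$ one has $r_{M(K_n^{\Gamma})}(C)=|C|-1$ and $r_M(C)=|C|$, and $r_{M'}(C)$ may drop to $|C|-1$, making $C$ a circuit of $M'$. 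So one cannot simply ``verify'' that $M'\in\cM_{n,\Gamma}$. The paper never makes such a reduction; instead it passes to a minimal non-abelian counterexample $\Gamma$, finds non-commuting $a_1,a_2$ in distinct blocks of the partition supplied by \cite[Lemma 21]{walsh2022new}, observes that $\langle a_1,a_2\rangle=\Gamma$ by minimality, and then uses \cite[Lemma 20]{walsh2022new} together with submodularity to conclude that $M$ itself already has rank $n+1$.

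Your third paragraph is also where the argument diverges most from the paper, and as written it is not a proof. Both $4$-cycles you list are balanced and hence circuits of $M$, but they share only the edge $\gamma_{34}$ (unless $\alpha\beta=\beta\alpha$), and no rank identity relating two circuits that meet in a single element will force $(\alpha\beta\gamma)_{14}=(\beta\alpha\gamma)_{14}$. The paper's route to abelianness is entirely different and is where $n\ge 4$ is actually used: one shows that each block $A\cup\epsilon$ of the partition is a \emph{normal} subgroup by exhibiting, for a non-normal $A$, two hyperplanes $H_1=E_{A\cup\epsilon}$ and $H_2=$ (edges within the parts of a vertex bipartition $V\cup V'$ with $|V|,|V'|\ge 2$) whose intersection, together with a single extra edge, spans $M$ after switching---violating the hyperplane axioms. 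Once every $A\cup\epsilon$ is normal, pairwise-trivially-intersecting normal subgroups commute, and a centralizer count (each $|A\cup\epsilon|\le|\Gamma|/2$) finishes the proof that $\Gamma$ is abelian; the $\bZ_p^j$ conclusion then comes from Theorem~\ref{abelian groups converse} rather than from a separate prime-order argument.
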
 
\begin{proof}
It suffices to prove that $\Gamma$ is abelian; then Theorem \ref{abelian groups converse} implies that $\Gamma\cong \bZ_p^j$ for some prime $p$ and integer $j \ge 2$.
Let $\epsilon$ denote the identity element of $\Gamma$.
It was proved in \cite[Lemma 19]{walsh2022new} that each $\alpha \in \Gamma - \{\epsilon\}$ satisfies $r_M(E_{\{\alpha, \ep\}}) = n$.
For $\alpha,\beta\in\Gamma-\{\ep\}$, we write $\alpha\sim\beta$ if $r_M(E_{\{\alpha,\beta,\ep\}})=n$.
By \cite[Lemma 21]{walsh2022new}, $\sim$ is an equivalence relation, and for each equivalence class $A$ we have that $r_M(E_{A \cup \ep}) = n$ and $A \cup \ep$ is a subgroup of $\Gamma$.
Let $\cA$ denote the set of equivalence classes under $\sim$.
Then $\cA$ is a group partition of $\Gamma$, and $|\cA|\ge 2$ because $r(M) - r(M(K_n^{\Gamma})) > 1$.

\begin{claim} \label{abelian}
    If $\alpha, \beta \in \Gamma$ commute whenever $\alpha$ and $\beta$ are in different sets in $\cA$, then $\Gamma$ is abelian.
\end{claim}
\begin{proof}
Let $\alpha \in \Gamma$, and let $A \in \cA$ contain $\alpha$.
Since $A \cup \ep$ is a proper subgroup of $\Gamma$, we have $|A|< |\Gamma|/2$, and so $|\Gamma - A| > |\Gamma|/2$.
The centralizer of $\alpha$ contains $\Gamma - A$, and thus contains more than $|\Gamma|/2$ elements.
Since the centralizer of $A$ is a subgroup of $\Gamma$, it follows that it is equal to $\Gamma$.
Thus, $\alpha$ commutes with every element of $\Gamma$.
Since the same argument applies to every element of $\Gamma$, it follows that $\Gamma$ is abelian.
Now that $\Gamma$ is abelian the theorem statement follows from Theorem \ref{abelian groups converse}.
\end{proof}

Suppose the theorem is false, and let $\Gamma$ be a finite non-abelian group with $|\Gamma|$ minimal so that the theorem is false for $\Gamma$.
We first show that $\Gamma$ has a $2$-element generating set.
We may assume that there are distinct sets $A_1$ and $A_2$ in $\cA$ and elements $a_1 \in A_1$ and $a_2 \in A_2$ that do not commute, or else the theorem statement holds by Claim \ref{abelian}.
Let $\Gamma'$ be the subgroup of $\Gamma$ generated by $\{a_1, a_2\}$, and let $M' = M|E_{\Gamma'}$. 
Since $a_1 \nsim a_2$, $r(M') - r(M(K_n^{\Gamma'})) > 1$.
Since $M'$ is a restriction of $M$, a cycle of $K_n^{\Gamma'}$ is a circuit of $M'$ if and only if it is balanced.
If $\Gamma' \ne \Gamma$, then the minimality of $|\Gamma|$ implies that $\Gamma'$ is abelian.
But then $a_1$ and $a_2$ commute, a contradiction.
So $\{a_1, a_2\}$ is a $2$-element generating set of $\Gamma$.
It follows from \cite[Lemma 20]{walsh2022new} that $E_{\{a_1, a_2, \ep\}}$ spans $M$, and it follows from the submodularity of $r_M$ that $r_M(E_{\{a_1, a_2, \ep\}}) = n + 1$.
So $M$ is a rank-$2$ lift of $M(K_n^{\Gamma})$, because the cycle matroid of a connected $n$-vertex graph has rank $n-1$.

Suppose for a contradiction that there is some $A \in \cA$ so that $A \cup \epsilon$ is not a normal subgroup of $\Gamma$.
We will define a pair of hyperplanes of $M$ that violate the hyperplane axioms.
Let $H_1 = E_{A \cup \epsilon}$.
Note that $H_1$ is a hyperplane of $M$, by the definition of $\sim$.
Let $\{V, V'\}$ be a partition of $[n]$ with $|V|, |V'| \ge 2$, and let $H_2$ be the set of edges with both ends in the same part.
Then $H_2$ is a flat of $M(K_n^{\Gamma})$, and is therefore a flat of $M$ because $M$ is a lift of $M(K_n^{\Gamma})$ (see \cite[Prop. 7.3.6]{oxley2006matroid}).
This implies that $r_M(H_2) \le r(M) - 1$ because $H_2$ does not include all elements of $M$.
Let $f$ be an edge labeled by $\epsilon$ and with ends $v \in V$ and $v' \in V'$.
Then each edge that has one end in $V$ and the other end in $V'$ but does not have ends $v$ and $v'$ is contained in a balanced cycle with three or four edges and all but one edge in $H_2 \cup f$.
This implies that $H_2 \cup f$ spans all edges that do not have ends $v$ and $v'$, and it follows that $H_2 \cup f$ spans $M$.
So $r_M(H_2) \ge r(M) - 1$ and therefore $H_2$ is a hyperplane of $M$.
Since $A \cup \ep$ is not normal, there is some element $b \in \Gamma$ so that $b^{-1}\cdot A \cdot b \ne A$.
Let $a \in A$ so that $b^{-1} \cdot a \cdot b = c$ for some $c \notin A \cup \epsilon$.
Consider the set $B = (H_1 \cap H_2) \cup e$, where $e$ is an edge with one end in $V$ and the other in $V'$, with label $b$, and directed from $V$ to $V'$.
By the hyperplane axioms, $B$ is contained in a hyperplane of $M$.
However, we claim that $B$ spans $M$.
Let $B'$ be obtained from $B$ by switching with value $b$ at each vertex in $V$.
In $B'$, the edge $e$ is labeled by $\epsilon$, and the set of labels of edges between each pair of vertices in $V$ is $b^{-1} \cdot (A \cup \epsilon) \cdot b$.
Note that $B'$ has a spanning tree of edges all labeled by $\epsilon$.
Then the set obtained from $B'$ by taking closure under balanced cycles contains $E_{\{\epsilon, a, c\}}$, which spans $M$ because $a \nsim c$.
Since switching preserves balanced cycles, this implies that $B$ also spans $M$, a contradiction.

We have shown that $A \cup \ep$ is a normal subgroup of $\Gamma$ for each $A \in \cA$.
Since normal subgroups that intersect in the identity commute, it follows that each $\alpha \in A$ commutes with every element in $\Gamma - A$.
Thus, Claim \ref{abelian} implies that $\Gamma$ is abelian, which is a contradiction.
\end{proof}

Surprisingly, Theorem \ref{abelian groups converse} does not generalize to non-abelian groups when $n = 3$.
Walsh showed that if there exists $M \in \mathcal{M}_{n,\Gamma}$ whose rank is at least two greater than that of $M(K_n^\Gamma)$,
then $\Gamma$ has a nontrivial partition~\cite[Lemma~21]{walsh2022new}.
Theorem~\ref{thm: partition implies higher rank lifts} establishes the converse for $n = 3$.
Recall that a \emph{nontrivial partition} of a group $\Gamma$ with identity $\ep$ is a partition $\cA$ of $\Gamma - \{\ep\}$ so that $A \cup \ep$ is a subgroup of $\Gamma$ for all $A \in \cA$, and $|\cA| \ge 2$.
For example, $\bZ_p^j$ has a nontrivial partition into $\frac{p^j - 1}{p - 1}$ copies of the cyclic group $\bZ_p$, and the dihedral group has a nontrivial partition where each reflection is a part and the nontrivial rotations form a part.
We direct the reader to \cite{zappa2003partitions} for background on group partitions.

A group may have multiple nontrivial partitions.
That said, each finite group with a nontrivial partition has a canonical nontrivial partition called the \emph{primitive partition},
first described in~\cite{young1927partitions}.
It is universal in the following sense: if $\mathcal{A}$ is the primitive partition of a finite group $G$ and
$\mathcal{B}$ is a nontrivial partition of $G$, then for each $B \in \mathcal{B}$,
the following is a partition of $B \cup \epsilon$:
\[
    \{A \in \mathcal{A} : A \cup \epsilon \textnormal{ is a subgroup of } B\cup \epsilon\}.
\]
For our purposes, the most important property of the primitive partition is the following.
\begin{prop}[\cite{baer1961partitionen,zappa2003partitions}]\label{prop: primitive partition is normal}
    Let $\Gamma$ be a finite group with a nontrivial partition, and let $\mathcal{A}$ be the primitive partition of $\Gamma$.
    If $A \in \mathcal{A}$ and $\gamma \in \Gamma$, then $\gamma \cdot A \cdot \gamma^{-1} \in \mathcal{A}$.
\end{prop}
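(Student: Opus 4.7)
The plan is to invoke the universal property stated just above the proposition and exploit the fact that conjugation is an automorphism of $\Gamma$. Write $\sigma_\gamma \colon \Gamma \to \Gamma$ for the map $x \mapsto \gamma x \gamma^{-1}$. Since $\sigma_\gamma$ is a group automorphism, it fixes the identity $\epsilon$, permutes subgroups, and preserves subgroup containment. Hence $\sigma_\gamma(\mathcal{A}) := \{\gamma A \gamma^{-1} : A \in \mathcal{A}\}$ is again a partition of $\Gamma - \{\epsilon\}$ into sets whose union with $\epsilon$ is a subgroup of $\Gamma$, and it has the same number of parts as $\mathcal{A}$, so it is nontrivial.

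Next, I would show that $\sigma_\gamma(\mathcal{A})$ also satisfies the universal property that characterizes the primitive partition. Let $\mathcal{B}$ be any nontrivial partition of $\Gamma$. Then $\sigma_\gamma^{-1}(\mathcal{B}) := \{\gamma^{-1} B \gamma : B \in \mathcal{B}\}$ is likewise a nontrivial partition, so applying the universal property of $\mathcal{A}$ to $\sigma_\gamma^{-1}(\mathcal{B})$ tells us that, for each $B \in \mathcal{B}$, the collection of $A \in \mathcal{A}$ with $A \cup \{\epsilon\}$ a subgroup of $\gamma^{-1} B \gamma \cup \{\epsilon\}$ partitions $\gamma^{-1} B \gamma \cup \{\epsilon\}$. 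Applying $\sigma_\gamma$ to both sides and using that $\sigma_\gamma$ preserves the subgroup relation, we conclude that the sets $\gamma A \gamma^{-1} \in \sigma_\gamma(\mathcal{A})$ with $\gamma A \gamma^{-1} \cup \{\epsilon\}$ a subgroup of $B \cup \{\epsilon\}$ partition $B \cup \{\epsilon\}$. This is exactly the universal property for $\sigma_\gamma(\mathcal{A})$.

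To finish, I would invoke uniqueness of the primitive partition to conclude $\sigma_\gamma(\mathcal{A}) = \mathcal{A}$, which is exactly what we want. Uniqueness is essentially immediate from the universal property: if two nontrivial partitions of $\Gamma - \{\epsilon\}$ both satisfy it, then each refines the other, and two partitions of the same set that mutually refine each other must coincide. The only real obstacle is that the paper states the universal property but does not explicitly record uniqueness; I would either cite it from \cite{baer1961partitionen,young1927partitions,zappa2003partitions} or insert the two-line mutual-refinement argument. Beyond that there is no genuinely hard step: the proof is essentially a symmetry argument saying that the intrinsic definition of the primitive partition is automorphism-invariant, and inner automorphisms are a special case.
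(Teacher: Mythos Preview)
The paper does not prove this proposition; it is quoted from the literature (Baer, Zappa) without argument. Your proof is correct and self-contained: conjugation is an automorphism, so $\sigma_\gamma(\mathcal{A})$ is again a nontrivial partition; the universal refinement property transfers along the automorphism, so $\sigma_\gamma(\mathcal{A})$ also refines every nontrivial partition; and mutual refinement of $\mathcal{A}$ and $\sigma_\gamma(\mathcal{A})$ forces equality. The only point worth making explicit is that the paper's displayed universal property really does say ``$\mathcal{A}$ refines $\mathcal{B}$'': since the listed sets $A$ partition $B$, every part of $\mathcal{A}$ meeting $B$ lies inside $B$. With that reading your mutual-refinement uniqueness step is immediate, and no further citation is needed.
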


We next prove a partial generalization of Theorem \ref{groups construction} to non-abelian groups in the case that $n = 3$.

\begin{thm}\label{thm: partition implies higher rank lifts}
Let $\Gamma$ be a finite group with a nontrivial partition.
Then there is a rank-$2$ lift $M$ of $M(K_3^{\Gamma})$ so that a cycle of $K_3^{\Gamma}$ is a circuit of $M$ if and only if it is balanced.
\end{thm}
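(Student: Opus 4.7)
The plan is to apply Theorem~\ref{thm: new lift construction} to $M(K_3^\Gamma)$ with an appropriately chosen rank-$2$ matroid $N$ on the set of circuits of $M(K_3^\Gamma)$. Replacing $\mathcal{A}$ by the primitive partition of $\Gamma$ if necessary, I may assume by Proposition~\ref{prop: primitive partition is normal} that conjugation by any element of $\Gamma$ permutes the parts of $\mathcal{A}$.

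To define $N$, I would declare its loops to be the balanced cycles of $K_3^\Gamma$, and for each unbalanced cycle $C$ I would fix a canonical gain representative $g(C) \in \Gamma - \{\epsilon\}$: for a $2$-cycle $\{\alpha_{ij},\beta_{ij}\}$ the gain $\alpha\beta^{-1}$ is unambiguous up to inversion and each part is closed under inverse, while for a $3$-cycle I would fix a basepoint convention such as ``start at vertex~$1$''. Two unbalanced cycles are declared parallel in $N$ iff their gain representatives lie in a common part of $\mathcal{A}$, and I would truncate to rank $2$ if $\mathcal{A}$ has more than two parts, so that $r(N) = 2$ exactly. The rank is at least $2$ because $\mathcal{A}$ has at least two parts, each realized as the gain of some unbalanced cycle.

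The key step is verifying the hypothesis of Proposition~\ref{simpler perfect}. For $n = 3$, the modular pairs of $M(K_3^\Gamma)$-circuits are of three types: (a) two $2$-cycles on the same vertex pair that share an edge; (b) two $2$-cycles on different vertex pairs, where $C_1\cup C_2$ contains no third $M$-circuit so the condition is vacuous; and (c) a $2$-cycle and a $3$-cycle sharing an edge. In types (a) and (c) the three relevant gains satisfy a multiplicative relation of the form $g(C_1)\cdot g(C_3) = g(C_2)$, possibly up to conjugation by an element of $\Gamma$ in the non-abelian case of (c). Because each $A_i \cup \{\epsilon\}$ is a subgroup, if two of the three cycles lie in a common part then so does the third, unless the third has gain $\epsilon$, in which case it is balanced and is a loop of $N$; in either case $C \in \cl_N(\{C_1,C_2\})$.

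Applying Theorem~\ref{thm: new lift construction} then yields $L = M^N$, a rank-$2$ lift of $M(K_3^\Gamma)$, and since the loops of $N$ are exactly the balanced cycles, a cycle of $K_3^\Gamma$ is a circuit of $L$ if and only if it is balanced. The main obstacle will be case (c) for non-abelian $\Gamma$ when the $2$-cycle lies on a vertex pair not containing the basepoint of the $3$-cycles, since the multiplicative relation then picks up a conjugation by a group element read off from the non-shared edges of the $3$-cycle. Here Proposition~\ref{prop: primitive partition is normal} is essential, as it controls how conjugation acts on parts; overcoming the obstacle likely requires either refining the basepoint convention cycle by cycle, or enlarging the parallel classes of $N$ to unions of parts within a common conjugation orbit, while checking that $r(N) = 2$ is preserved and the modular condition still holds for all three types.
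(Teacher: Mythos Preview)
Your approach via Theorem~\ref{thm: new lift construction} cannot work when $\Gamma$ is non-abelian; the paper itself remarks, just after the proof of Theorem~\ref{all groups converse}, that no matroid $N$ on the cycles of $K_3^\Gamma$ satisfying the hypothesis of Theorem~\ref{thm: new lift construction} produces a non-elementary lift in $\cM_{3,\Gamma}$ when $\Gamma$ is non-abelian. The obstacle you flag in case~(c) is fatal rather than technical. For a concrete failure take $\Gamma=S_3$ with primitive partition $\{(12)\},\{(13)\},\{(23)\},\{(123),(132)\}$, and consider the $2$-cycle $C_1=\{(12)_{23},\epsilon_{23}\}$ together with the $3$-cycle $C_2=\{(123)_{12},(12)_{23},(132)_{13}\}$. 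Under any reasonable basepoint convention both have gain $(12)$, so they are parallel in your $N$; but the third circuit $C_3=\{(123)_{12},\epsilon_{23},(132)_{13}\}$ of $M(K_3^\Gamma)$ contained in $C_1\cup C_2$ has gain $(123)\cdot(123)=(132)$, which lies in a different part. Hence $C_3\notin\cl_N(\{C_1,C_2\})$ and condition $(*')$ of Proposition~\ref{simpler perfect} fails. Neither of your proposed fixes helps: enlarging parallel classes to conjugation orbits still separates transpositions from $3$-cycles in this example, and in general the type-(a) modular pairs then force the enlarged class to be closed under products of elements from distinct conjugate parts, which absorbs all of $\Gamma-\{\epsilon\}$ and collapses $r(N)$ to~$1$.

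The paper therefore abandons Theorem~\ref{thm: new lift construction} entirely and constructs $M$ directly by specifying its hyperplane set $\cH$: the edge sets switching-equivalent to $E_{A\cup\epsilon}$ for $A$ in the primitive partition, together with the three parallel classes of $M(K_3^\Gamma)$. The substance of the argument is the verification of the hyperplane axiom (for distinct $H_1,H_2\in\cH$ and $e\notin H_1\cup H_2$, some $H\in\cH$ contains $(H_1\cap H_2)\cup e$), and Proposition~\ref{prop: primitive partition is normal} enters there to show that two distinct switching-type hyperplanes can meet only in a balanced set or in a set missing some vertex, after which Claim~\ref{sets contained in hyperplanes}-style reasoning finishes. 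The essential point is that switching acts on edge \emph{sets} rather than on individual cycle gains, and this global symmetry is exactly what a matroid $N$ on single cycles cannot encode for non-abelian~$\Gamma$.
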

\begin{proof}
Let $\epsilon$ be the identity element of $\Gamma$, and let $\mathcal A$ be the primitive partition of $\Gamma$.
We define a collection $\cH$ of subsets of the edges of $K_3^{\Gamma}$ where $H \in \cH$ if
\begin{enumerate}[(a)]
\item $H$ is switching equivalent to $E_{A \cup \{\epsilon\}}$ for some $A \in \mathcal A$, or

\item $H$ consists of all edges between some pair $i,j \in \{1,2,3\}$.
\end{enumerate}

Note that $\cH$ is invariant under switching, and also relabeling vertices.
We need the following fact about nontrivial partitions.

\begin{claim} \label{group theory}
Let $A \in \cA$ and let $\alpha, \beta\in \Gamma$ so that $\epsilon \in \alpha \cdot (A\cup \epsilon) \cdot \beta$.
Then $\alpha \cdot (A\cup \epsilon) \cdot \beta = A' \cup \epsilon$ for some $A' \in \cA$.
\end{claim}
\begin{proof}
There is some $\gamma \in A \cup \ep$ so that $\alpha \cdot \gamma \cdot \beta = \epsilon$,
i.e.~so that $\beta^{-1} = \alpha\cdot \gamma$.
Since $A \cup \epsilon$ is a subgroup of $\Gamma$,
$\gamma \cdot (A \cup \epsilon) = A \cup \epsilon$ and therefore
$\alpha \cdot (A \cup \epsilon) \cdot \beta = \beta^{-1} \cdot (A \cup \epsilon) \cdot \beta$.
Proposition~\ref{prop: primitive partition is normal} then implies the claim.
\end{proof}

We use the following to show that certain edge sets are not contained in a set in~$\cH$.

\begin{claim} \label{normal tree}
Let $X$ be a set of edges that contains an $\ep$-labeled spanning tree of $K_3^{\Gamma}$, and an edge labeled by $\alpha \in \Gamma - \{\epsilon\}$.
Suppose $X$ is contained in a set in $H \in \mathcal H$.
Then $H = E_{A \cup \epsilon}$, where $A$ is the set in $\mathcal A$ that contains $\alpha$. 
\end{claim}
\begin{proof}
If $H$ contains a path,
then $H$ also contains the balanced cycle obtained by completing that path to a cycle.
Therefore we may assume that $X$ is closed under completing paths to balanced cycles.
Therefore $E_{\epsilon} \subseteq  X$.
By symmetry we may assume that $\alpha_{12} \in X$.
Since $X$ is closed under completing paths to balanced cycles, either $\alpha_{ij}$ or $\alpha^{-1}_{ij}$ is in $X$ for all $i,j \in \{1,2,3\}$.
Let $A \in \cA$ so that $\alpha \in A$.

Say $H$ is obtained from $E_{A' \cup \epsilon}$ for some $A' \in \cA$ by switching at each vertex $i$ with some $a_i \in \Gamma$.
Since $H$ contains $X$ which contains $\epsilon_{12}$, we have that $a_1^{-1} \cdot (A' \cup \epsilon) \cdot a_2$ contains $\epsilon$.
Claim~\ref{group theory} implies that $a_1^{-1} \cdot (A' \cup \epsilon) \cdot a_2 = A_{12} \cup \epsilon$ for some $A_{12} \in \cA$.
So the labels on the edges of $H$ between $1$ and $2$ are the elements of $A_{12} \cup \epsilon$.
Similarly, we can argue that between $i$ and $j$,
the edge labels are the elements of $A_{ij} \cup \epsilon$ for some $A_{ij} \in \cA$.
But since $\alpha_{ij}$ or $\alpha^{-1}_{ij}$ is in $X$ and $X \subseteq H$ it follows that $A_{ij}$ contains $\alpha_{ij}$ or $\alpha^{-1}_{ij}$, so $A_{ij} = A$ for all $i,j \in \{1,2,3\}$, and $H = E_{A \cup \epsilon}$.
\end{proof}

A set of edges of $K_3^{\Gamma}$ is \emph{balanced} if it contains no unbalanced cycles.

\begin{claim}\label{sets contained in hyperplanes}
If a set $X$ of edges has at most $3$ edges or consists of a balanced cycle with an extra edge, then $X$ is contained in some $H \in \cH$.
\end{claim}
\begin{proof}
First suppose that $X$ has at most $3$ edges.
If $X$ does not contain a spanning tree of $K_3^{\Gamma}$ then $X$ is contained in a set in $\mathcal H$ of type (b).
So assume otherwise.
If $X$ is balanced then $X$ is switching equivalent to a subset of $E_\epsilon$ and thus lies in $E_{A \cup \ep}$ for every $A \in \cA$.
If $X$ is not balanced, then $X$ is switching equivalent to the graph obtained by adding
a single edge of the form $\alpha_{ij}$ to a spanning tree where every edge has label $\epsilon$.
Let $A \in \mathcal{A}$ contain $\alpha$. 
Then $X$ is contained in a set switching equivalent to $E_{A \cup \epsilon}$.

It remains to consider the case that $X$ consists of a balanced triangle with a doubled edge.
By switching, we may assume that each edge of the triangle is $\epsilon$.
Without loss of generality assume the extra edge is $\alpha_{12}$.
Then $X$ is contained in $E_{A \cup \epsilon}$ where $A \in \mathcal{A}$ contains $\alpha_{12}$.
\end{proof}

We now show that $\cH$ is the collection of hyperplanes of a matroid $M$.
We must show that for all distinct $H_1, H_2 \in \cH$ and $e \notin H_1 \cup H_2$, there is a set in $\cH$ that contains $H_1 \cap H_2$ and $e$.
This is clearly true if $H_1$ and $H_2$ are both type (b).
If $H_1$ is type (a) and $H_2$ is type (b) then, up to switching equivalence and relabeling vertices, $H_1 \cap H_2$ consists of all edges between vertices $1$ and $2$ with label in $A \cup \epsilon$ for some $A \in \mathcal A$.
Then $e$ has vertex $3$ as an end, and by switching at vertex $3$ we may assume that $e$ is labeled by $\epsilon$.
Then $(H_1 \cap H_2) \cup e$ is contained in $E_{A \cup \ep}$, which is in $\cH$.
So we assume that $H_1$ and $H_2$ are both type (a).
For $i \in \{1,2\}$, say $H_i$ is switching equivalent to $E_{A_i \cup \epsilon}$ for $A_i \in \cA$.
We may assume that $H_1 \cap H_2$ contains a spanning tree.
Otherwise, if $e$ has the same ends as each edge in $H_1 \cap H_2$, then $(H_1 \cap H_2) \cup e$ is contained in a set of type (b).
If $e$ has an end not incident to any edge in $H_1 \cap H_2$, then we can argue as in the case where $H_1$ is of type (a) and $H_2$ of type (b) that $(H_1 \cap H_2) \cup e$ is contained in a set of type (a).
By switching equivalence, we may further assume that the edges of a spanning tree of $H_1 \cap H_2$ are all labeled by $\epsilon$.
If $H_1 \cap H_2$ contains an edge labeled by $\alpha \in \Gamma - \{\epsilon\}$, then $H_1 = H_2$ by Claim \ref{normal tree}, a contradiction.
So $H_1 \cap H_2$ is a balanced cycle, and therefore $(H_1 \cap H_2) \cup e$ is contained in a set in $\cH$ by Claim \ref{sets contained in hyperplanes}.
Thus, $\cH$ is the hyperplane set of a matroid $M$.

Next, $M$ is in fact a rank-$2$ lift of the graphic matroid $M(K_3^{\Gamma})$.
The only nonempty proper flats of $M(K_3^{\Gamma})$ are the parallel classes of edges.
All such sets are flats of $M$ as well, so $M$ is a lift of $M(K_3^{\Gamma})$ by \cite[Prop. 7.3.6]{oxley2006matroid}.
$M$ is not an elementary lift because every set of three edges is contained in a hyperplane by Claim~\ref{sets contained in hyperplanes}.
Given $\alpha,\beta \in \Gamma -  \{\epsilon\}$ such that no $A \in \mathcal{A}$ contains both $\alpha$ and $\beta$,
the set  $\{\epsilon_{12}, \alpha_{12}, \epsilon_{23}, \beta_{23}\}$ is not contained in a set in $\cH$ by Claim \ref{normal tree}.
Therefore the rank of $M$ is $4$, i.e. $M$ is a rank-$2$ lift of $M(K_3^\Gamma)$.

Finally, we show that a cycle of $K_3^{\Gamma}$ is a circuit of $M$ if and only if it is balanced.
Claim~\ref{sets contained in hyperplanes} implies that no balanced cycle is contained in a basis.
Now suppose $C$ is an unbalanced cycle.
Up to switching equivalence and relabeling vertices we may assume that $C = \{\epsilon_{12}, \epsilon_{23}, \alpha_{13}\}$.
Consider $B = C \cup \beta_{12}$ where $\beta$ and $\alpha$ are not in the same set in $\mathcal A$.
Then $B$ is not contained in a set in $\cH$ by Claim \ref{normal tree}.
Therefore $B$ is a basis of $M$ and thus $C$ is independent in $M$.
\end{proof}

We now prove Theorem~\ref{all groups converse}.

\begin{proof}[Proof of Theorem~\ref{all groups converse}]
    Let $M \in \mathcal{M}_{n,\Gamma}$ and assume $M$ has rank at least $n+1$, i.e.~is a non-elementary lift of $M(K_n^\Gamma)$.
    It follows from~\cite[Lemma~5.3]{walsh2022new} that $\Gamma$ has a nontrivial partition.
    Theorem~\ref{thm: 4 or more} implies that if $n \ge 4$ then $\Gamma = \mathbb{Z}_p^j$ for a prime $p$ and integer $j \ge 2$.
    
    Conversely, if $\Gamma$ has a nontrivial partition and $n=3$ then Theorem~\ref{thm: partition implies higher rank lifts}
    implies that a desired lift exists.
    If $n \ge 4$, then the desired lift is given by Theorem~\ref{groups construction}.
\end{proof}

We comment that while Theorem \ref{thm: partition implies higher rank lifts} constructs a rank-$2$ lift of $M(K_3^{\Gamma})$ that respects balanced cycles, no such lift can be constructed using Theorem \ref{thm: new lift construction} and a matroid $N$ on the cycles of $K_3^{\Gamma}$ when $\Gamma$ is non-abelian; this can be proved in a similar manner to Theorem~\ref{thm: 4 or more}.
So the matroids constructed in the proof of Theorem \ref{thm: partition implies higher rank lifts} provide another family of examples for which Theorem \ref{thm: new lift construction} does not apply.
In particular, if $\Gamma$ is non-abelian and $M$ is a $2$-element extension of one of the matroids from Theorem \ref{thm: partition implies higher rank lifts} by a set $X$ so that $M/X = M(K_3^{\Gamma})$, then $M$ non-representable by Proposition \ref{prop: representable lifts}.
Finally, we point out that Theorem \ref{thm: partition implies higher rank lifts} constructs a rank-$2$ lift of $M(K_3^{\Gamma})$, while for certain abelian groups it is possible to construct higher-rank lifts, as shown by Theorem \ref{groups construction}.
Is this possible for non-abelian groups as well? 
We expect an affirmative answer when $\Gamma$ has no two-element generating set.

\bibliographystyle{plain}
\bibliography{references.bib}

\end{document}